	\newtheorem{te}{Theorem}[section]
	\newtheorem{ex}{Example}[section]
	\newtheorem{definition}{Definition}[section]
	\newtheorem{os}{Remark}[section]
	\newtheorem{prop}{Proposition}[section]
	\newtheorem{lem}{Lemma}[section]
\begin{document}

		\title{Hilfer--Prabhakar Derivatives and Some Applications}

		\author{Roberto Garra$^1$}
			\address{${}^1$Dipartimento di Scienze di Base e Applicate per l'Ingegneria,
			Sapienza Universit\`a di Roma.}
		\author{Rudolf Gorenflo$^2$}
			\address{${}^2$Department of Mathematics and Informatics, Free University of
			Berlin.}
		\author{Federico Polito$^3$}
			\address{${}^3$Dipartimento di Matematica ``G.\ Peano'', Universit\`a degli Studi di
			Torino.}
		\author{\v{Z}ivorad Tomovski$^4$}
			\address{${}^4$Department of Mathematics, Sts.\ Cyril and Methodius
			University, Skopje.}

		\date{\today}

		\begin{abstract}

			We present a generalization of Hilfer derivatives in which Riemann--Liouville
			integrals are replaced by more general Prabhakar integrals. We analyze and discuss its properties.
			Furthermore, we show some applications of these generalized Hilfer--Prabhakar
			derivatives in classical equations of mathematical physics such as the heat and
			the free electron laser equations, and in difference-differential
			equations governing the dynamics of generalized renewal
			stochastic processes.

			\bigskip			
			
			\textit{Keywords}: Hilfer--Prabhakar derivatives, Prabhakar Integrals,
			Mittag--Leffler functions, Generalized Poisson Processes
		
		\end{abstract}

		\maketitle

		\section{Introduction}
		
			In the recent years fractional calculus has gained much interest mainly thanks to the increasing presence of
			research works in the applied sciences considering models based on fractional operators.
			Beside that, the mathematical study of fractional calculus has proceeded, leading to intersections
			with other mathematical fields such as probability and the study of stochastic processes.
			
			In the literature, several different definitions of fractional
			integrals and derivatives are present. Some of them such as the Riemann--Liouville integral, the Caputo and the Riemann--Liouville
			derivatives are thoroughly studied and actually used in applied models. Other less-known definitions such as
			the Hadamard and Marchaud derivatives are mainly subject of mathematical investigation
			(the reader interested in fractional calculus in general can consult one of the classical
			reference texts such as \cite{samko,kila,podlubny}).
			
			In this paper we introduce a novel generalization of derivatives of 
			both Riemann--Liouville and Caputo types and show the effect of using it in equations
			of mathematical physics or related to probability. In order to do so,
			we start from the definition of generalized fractional derivatives given by R.~Hilfer \cite{hill1}.
			The so-called Hilfer fractional derivative is in fact a very convenient way to generalize
			both definitions of derivatives as it actually interpolates them
			by introducing only one additional real parameter $\nu\in [0,1]$. The further generalization that we are going to discuss
			in this paper is given by replacing Riemann--Liouville fractional integrals with Prabhakar integrals 
			in the definition of Hilfer derivatives. We recall that the Prabhakar integral \cite{Prab} is obtained by modifying 
			the Riemann--Liouville integral operator by extending its kernel with a three-parameter Mittag--Leffler function,
			a function which extends the well-known two-parameter Mittag--Leffler function. This latter function
			was used by J.D.\ Tamarkin \cite{tam} in 1930
			and later gained importance in treating problems of fractional
			relaxation and oscillation, see e.g.\ \cite{marev} for a grand
			survey. 
			The Hilfer--Prabhakar derivative (which contains the Hilfer derivative as a specific case)
			interpolates the Prabhakar derivative, first introduced in \cite{Kil} and its Caputo-like 
			regularized counterpart. In Section \ref{barabasi}, we study some of its properties and,
			in Section \ref{appll}, we discuss some related applications of interest in mathematical physics and probability.
			We commence by analyzing the time-fractional heat equation involving Hilfer--Prabhakar derivatives. 
			We discuss the main differences between the solution of the Cauchy problems
			involving the non-regularized and the regularized operators.
			Another integro-differential equation of interest for applications is the free electron laser (FEL)
			integral equation \cite{dat}. This equation arises in the description of the unsatured behavior of the free electron laser.
			Several generalizations of this equation involving other fractional operators have been studied in literature 
			(see e.g. \cite{kil1}).
			Taking inspiration from these works, we study a FEL-type integro-differential equation involving Hilfer--Prabhakar fractional
			derivatives.			
			A further application that we study in Section \ref{poipoi} regards the derivation of a renewal point
			process which is in fact a direct generalization of the classical homogeneous Poisson process
			and the time-fractional Poisson process. The connection with Hilfer--Prabhakar derivatives
			comes from the fact that the state probabilities are
			governed by time-fractional difference-differential equations involving Hilfer--Prabhakar derivatives.
			We give a complete discussion of the main properties of this process, providing also the explicit form of the probability 
			generating function, a subordination representation in terms of a time-changed Poisson process, and its renewal structure.

			Before introducing the definition of the Hilfer--Prabhakar derivatives, in Section \ref{prel}, in order
			to make the paper self-contained, we recall some basic definitions and results of fractional calculus.
			Section \ref{fer} presents instead the definition of Hilfer derivatives along with some of their properties.

		\section{Preliminaries on fractional calculus}

			\label{prel}
			Before introducing the non-regularized and regularized Hilfer--Prabhakar differential operators,
			for the reader's convenience, in this section we recall some definitions of classical fractional operators.
			In particular, the classical Riemann--Liouville derivative and its regularized operator (the so-called
			Caputo derivative) will be described.			
			However, in order to gain more insight on fractional calculus the reader can consult the classical
			reference books \cite{samko,podlubny,kila,diethelm}.
	
			\begin{definition}[Riemann--Liouville integral]
			    Let $f \in L^1_{\text{loc}}[a,b]$, where $-\infty \le a < t < b \le \infty$, be a locally integrable real-valued function.
			    The Riemann--Liouville integral is defined as
			    \begin{align}
			        \label{rlint}
			        I^{\alpha}_{a^+} f(t) & =\frac{1}{\Gamma(\alpha)}\int_{a}^t\frac{f(u)}{%
			        (t-u)^{1-\alpha}}du
			        = (f \ast K_\alpha )(t), \qquad \alpha > 0,
			    \end{align}
			    where $K_\alpha (t) = t^{\alpha-1}/\Gamma(\alpha)$.
			\end{definition}
	
			\begin{definition}[Riemann--Liouville derivative]
			    Let $f \in L^1[a,b]$, $-\infty \le a < t < b \le \infty$, and $f \ast K_{m-\alpha} \in W^{m,1}[a,b]$, $m = \lceil \alpha \rceil$,
			    $\alpha>0$,
			    where $W^{m,1}[a,b]$ is the Sobolev space defined as
			    \begin{align}
			        W^{m,1}[a,b] = \left\{ f \in L^1[a,b] \colon \frac{d^m}{dt^m} f \in L^1[a,b] \right\}.
			    \end{align}
			    The Riemann--Liouville derivative of order $\alpha >0$ is defined as
			    \begin{align}
			        \label{rlder}
			        D^\alpha_{a^+}f (t) = \frac{d^m}{dt^m}I_{a^+}^{m-\alpha}f(t) = \frac{1}{%
			        \Gamma(m-\alpha)} \frac{d^m}{dt^m} \int_{a}^t (t-s)^{m-1-\alpha}f(s) ds.
			    \end{align}
			\end{definition}

			For $ n\in \mathbb{N}$, we denote by $AC^{n}\left[a,b\right]$ the
			space of real-valued functions $f\left( t\right) $ which have
			continuous derivatives up to order $n-1$ on $\left[ a,b\right] $
			such that $f^{\left(n-1\right) }\left(t\right)$ belongs to the space of absolutely continuous functions
			$AC\left[a,b\right]:$
			\begin{equation}
				AC^{n}\left[a,b\right] =\left\{ f:\left[a,b\right] \rightarrow
				\mathbb{R}\colon\frac{d^{n-1}}{dx^{n-1}}f \left( x\right) \in AC\left[
				a,b\right] \right\} .
			\end{equation}

			\begin{definition}[Caputo derivative]
			    Let $\alpha>0$, $m = \lceil \alpha \rceil$, and $f \in AC^m[a,b]$.
			    The Caputo derivative of order $\alpha>0$ is defined as
			    \begin{equation}
			        \label{Capu}
			        {}^C D^{\alpha}_{a^+}f(t)= I_{a^+}^{m-\alpha}\frac{d^m}{dt^m}f(t)= \frac{1%
			        }{\Gamma(m-\alpha)}\int_a^{t}(t-s)^{m-1-\alpha}\frac{d^m}{ds^m}f(s) \, ds.
			    \end{equation}
			\end{definition}

			In the space of the functions belonging to $AC^m[a,b]$ the following
			relation between Riemann--Liouville and Caputo derivatives holds
			\cite{hill}.

			\begin{te}
			    \label{gianduia}
			    For $f \in AC^m[a,b]$, $m=\lceil \alpha \rceil$, $\alpha\in\mathbb{R}^+\backslash\mathbb{N}$,
			    the Riemann--Liouville derivative of order $\alpha$ of $f$
			    exists almost everywhere and it can be written as
			    \begin{align}
			        \label{perepe}
			        D_{a^+}^\alpha f(t) = {}^C D_{a^+}^\alpha f(t) + \sum_{k=0}^{m-1} \frac{(x-a)^{k-\alpha}}{\Gamma(k-\alpha+1)} f^{(k)}(a^+).
			    \end{align}
			\end{te}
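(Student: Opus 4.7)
My plan is to obtain the identity by applying the Riemann--Liouville derivative to the Taylor expansion of $f$ with integral remainder, which is available because $f\in AC^m[a,b]$.

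First, I would write, for $t\in[a,b]$,
\begin{align}
f(t) = \sum_{k=0}^{m-1} \frac{(t-a)^k}{k!}\, f^{(k)}(a^+) + \frac{1}{(m-1)!}\int_a^t (t-s)^{m-1} f^{(m)}(s)\, ds,
\end{align}
where the remainder is precisely $I^m_{a^+}f^{(m)}(t)$. Since $f\in AC^m[a,b]$ guarantees $f^{(m)}\in L^1[a,b]$, all pieces are admissible as inputs to $I^{m-\alpha}_{a^+}$, and one checks that $f\ast K_{m-\alpha}$ lies in $W^{m,1}[a,b]$, so that the hypothesis in the definition of $D^\alpha_{a^+}$ is satisfied and $D^\alpha_{a^+} f$ exists almost everywhere. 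At this point I would apply $D^\alpha_{a^+}$ to both sides, using linearity, and treat the polynomial sum and the remainder separately.

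For the polynomial part I would invoke the elementary identity
\begin{align}
D^\alpha_{a^+}(t-a)^k = \frac{\Gamma(k+1)}{\Gamma(k-\alpha+1)}(t-a)^{k-\alpha},\qquad k=0,1,\dots,m-1,
\end{align}
which follows by a direct computation from \eqref{rlder} (writing $(t-a)^k$ as a Beta-type integral after applying $I^{m-\alpha}_{a^+}$ and then differentiating $m$ times); this immediately yields the sum appearing on the right-hand side of \eqref{perepe}. For the remainder term I would use the semigroup property $I^{m-\alpha}_{a^+}\,I^m_{a^+} = I^{2m-\alpha}_{a^+}$ (valid for $L^1$ functions) together with $\frac{d^m}{dt^m}I^m_{a^+} = \mathrm{Id}$, obtaining
\begin{align}
D^\alpha_{a^+}\!\left(I^m_{a^+}f^{(m)}\right)(t) = \frac{d^m}{dt^m} I^{2m-\alpha}_{a^+} f^{(m)}(t) = I^{m-\alpha}_{a^+} f^{(m)}(t) = {}^C D^\alpha_{a^+} f(t),
\end{align}
where the last equality is the very definition \eqref{Capu} of the Caputo derivative. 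Combining the two contributions produces \eqref{perepe}.

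The main obstacle, in my view, is rigorously justifying the interchange of $\frac{d^m}{dt^m}$ with the integral hidden inside $I^{2m-\alpha}_{a^+}f^{(m)}$ and the termwise application of $D^\alpha_{a^+}$ to the Taylor expansion; this must be handled in the almost-everywhere sense, using the absolute continuity of $f^{(m-1)}$ and the fact that $\alpha\notin\mathbb N$ (so that $m-\alpha>0$ and the kernels $(t-s)^{m-1-\alpha}$ are locally integrable). Once the semigroup identity and the power-function formula are validated under the stated hypotheses, the result follows by summing the contributions.
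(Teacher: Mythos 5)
Your argument is correct and is the standard time-domain proof of this identity (essentially the one found in Diethelm's book and in Kilbas--Srivastava--Trujillo): expand $f$ by Taylor's formula with integral remainder, legitimate precisely because $f\in AC^m[a,b]$, apply the power rule $D^\alpha_{a^+}(t-a)^k=\tfrac{\Gamma(k+1)}{\Gamma(k-\alpha+1)}(t-a)^{k-\alpha}$ to the polynomial part, and use the semigroup property together with $\tfrac{d^m}{dt^m}I^m_{a^+}=\mathrm{Id}$ (a.e.) to turn $D^\alpha_{a^+}I^m_{a^+}f^{(m)}$ into $I^{m-\alpha}_{a^+}f^{(m)}={}^CD^\alpha_{a^+}f$. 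The paper, by contrast, does not prove the theorem at all: it quotes it from Hilfer's survey and only remarks afterwards that the identity can be verified by taking Laplace transforms of both sides once one knows that $\lim_{t\to a^+}\tfrac{d^k}{dt^k}I^{m-\alpha}_{a^+}f(t)=0$ for $0\le k\le m-1$. Your proof is therefore more self-contained, works on an arbitrary interval $[a,b]$, and avoids transform theory. Two caveats. First, your parenthetical claim that $f\ast K_{m-\alpha}\in W^{m,1}[a,b]$ is not true in general for $m\ge 2$: the $m$th derivative of $I^{m-\alpha}_{a^+}(t-a)^k$ is a multiple of $(t-a)^{k-\alpha}$, which fails to be integrable near $a$ whenever $k<\alpha-1$ and $f^{(k)}(a^+)\ne 0$; this is exactly why the theorem asserts only existence \emph{almost everywhere} (in fact pointwise on $(a,b]$) rather than membership of $D^\alpha_{a^+}f$ in $L^1$, and it is a mismatch with the paper's own Definition 2.2 rather than a defect of your argument (for $m=1$, the only case the paper actually uses, there is no issue). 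Second, the step $\tfrac{d^m}{dt^m}I^{2m-\alpha}_{a^+}f^{(m)}=I^{m-\alpha}_{a^+}f^{(m)}$ is cleanest if you factor $I^{2m-\alpha}_{a^+}=I^m_{a^+}I^{m-\alpha}_{a^+}$ in that order, so that you are differentiating an $m$-fold ordinary integral of the $L^1$ function $I^{m-\alpha}_{a^+}f^{(m)}$; this also disposes of the interchange-of-limits worry you raise at the end.
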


			The above theorem gives the set of functions where the
			Riemann--Liouville derivative can be regularized. Moreover, if
			$f(t)\in AC^m[a,b]$, we have (see e.g. \cite{AD})
			\begin{align}
			    \label{purupu}
			    \lim_{t \to a^+} \frac{d^k}{dt^k} I_{a^+}^{m-\alpha} f(t) = 0, \qquad \forall \quad 0 \le k \le m-1.
			\end{align}
			Indeed, taking the Laplace transform of both sides of \eqref{perepe} the equality holds if \eqref{purupu} is true.

			See also \citet{gorgor} for more information. In that paper (page 228) the \emph{Caputo fractional derivative} was baptized
		    so, ad late it gained rising popularity by Podlubny's book \citep{podlubny} on fractional differential equations.

		\section{Hilfer derivatives}

			\label{fer}
			In a series of works (see \cite{hill} and the references therein), R.\ Hilfer
			studied applications of a generalized fractional operator having the Riemann--Liouville
			and the Caputo derivatives as specific cases (see also \cite{tom,tom2}).
			\begin{definition}[Hilfer derivative]
			    Let $\mu\in (0,1)$, $\nu \in[0,1]$, $f \in L^1[a,b]$,
			    $-\infty \le a < t < b \le \infty$, $f \ast K_{(1-\nu)(1-\mu)} \in AC^1[a,b]$. The Hilfer derivative is defined as
			    \begin{equation}
			        \label{hil}
			        D^{\mu,\nu}_{a^+}f(t)=\left(I_{a^+}^{\nu(1-\mu)} \frac{d}{dt}%
			        (I_{a^+}^{(1-\nu)(1-\mu)}f)\right)(t),
			    \end{equation}
			\end{definition}
			Hereafter and without loss of generality we set $a=0$.
			The generalization \eqref{hil}, for $\nu = 0$,
			coincides with the Riemann--Liouville derivative \eqref{rlder} and for $\nu=
			1$ with the Caputo  derivative \eqref{Capu}.  A relevant point in the
			following discussion regards the initial conditions that should be
			considered  in order to solve  fractional Cauchy problems involving Hilfer
			derivatives.  Indeed, in view of the Laplace transform of the Hilfer
			derivative (\cite{tom}, formula (1.6))
			\begin{align}
				\mathcal{L}[D^{\mu,\nu}_{0^+}f](s)=s^{\mu}\mathcal{L}[f](s)-s^{\nu(\mu-1)}(I^{(1-\nu)(1-\mu)}_{0^+}f)(0^+),
			\end{align}
			it is clear that the initial conditions that must be considered are of the
			form  $(I_{0^+}^{(1-\nu)(1-\mu)}f)(0^+)$,  i.e.\ on the initial value of the
			fractional integral of order $(1-\nu)(1-\mu)$.  These initial conditions do
			not have a  clear physical meaning unless $\nu=1$.  In order to obtain a
			regularized  version of the Hilfer derivative, we must restrict ourselves to the
			set of absolutely continuous functions $AC^1[0,b]$ and therefore applying Theorem \ref{gianduia} we obtain,
			for $\mu \in (0,1)$,
			\begin{align}
				\label{lenovo}
				D^{\mu,\nu}_{0^+}f(t)&=\left(I_{0^+}^{\nu(1-\mu)}\frac{d}{dt}%
				(I_{0^+}^{(1-\nu)(1-\mu)}f)\right)(t) \\
				&= \left( I_{0^+}^{\nu(1-\mu)}I_{0^+}^{(1-\nu)(1-\mu)}\frac{d}{dt}%
				f \right) (t) + I_{0^+}^{\nu(1-\mu)} \frac{t^{\nu\mu-\nu-\mu}f(0^+)}{%
				\Gamma(1-\nu-\mu+\nu\mu)}  \notag \\
				&=I_{0^+}^{1-\mu}\frac{d}{dt}f(t)+\frac{t^{-\mu}f(0^+)}{\Gamma(1-\mu)}
				={}^CD^{\mu}_{0^+}f(t)+\frac{t^{-\mu}f(0^+)}{\Gamma(1-\mu)},  \notag
			\end{align}
			where we used the well-known semi-group property of Riemann--Liouville
			integrals and where  ${}^CD^{\mu}_{0^+}$  is the Caputo derivative %
			\eqref{Capu}.  From \eqref{lenovo} it follows that in the space $AC^1[0,b]$ the Hilfer
			derivative \eqref{hil} coincides with the Riemann--Liouville derivative of order $\mu$,
			and the regularized Hilfer derivative can be written as
			\begin{align}
				D^{\mu,\nu}_{0^+}f(t) - \frac{t^{-\mu}f(0^+)}{\Gamma(1-\mu)},
			\end{align}
			which coincides with ${}^CD^{\mu}_{0^+}$ and which in fact does not depend on the
			parameter $\nu$.

		\section{Hilfer--Prabhakar derivatives}

			\label{barabasi}
			We introduce a  generalization of Hilfer derivatives by
			substituting in  \eqref{hil} the Riemann--Liouville  integrals with a more
			general integral operator with kernel
			\begin{align}
				e^{\gamma}_{\rho,\mu,\omega}(t) = t^{\mu-1}E^{\gamma}_{\rho,\mu}\left(\omega t^{\rho} \right),
				\qquad t \in \mathbb{R}, \: \rho, \mu, \omega, \gamma\in \mathbb{C}, \: \Re(\rho),\Re(\mu)>0, 
			\end{align}
			where
		    \begin{equation}
		        E^{\gamma}_{\rho,\mu}(x)=\sum_{k=0}^{\infty}\frac{\Gamma(\gamma+k)}{%
		        \Gamma(\gamma)\Gamma(\rho k+\mu)}\frac{x^k}{k!},
		    \end{equation}
		    is the generalized Mittag--Leffler function first investigated in %
		    \cite{Prab}.			
			The so-called Prabhakar integral is
			defined as follows \cite{Prab,Kil}.
			\begin{definition}[Prabhakar integral]
			    Let $f \in L^1[0,b]$, $0 < t < b \le \infty$. The Prabhakar integral can be written as
			    \begin{equation}
			        \label{pra}
			        \mathbf{E}^{\gamma}_{\rho,\mu, \omega, 0^+}f(t)
			        =\int_0^{t}(t-y)^{\mu-1}E^{\gamma}_{\rho,\mu}\left[\omega (t-y)^{\rho} %
			        \right]f(y)dy = (f \ast e^{\gamma}_{\rho,\mu,\omega} )(t),
			    \end{equation}
			    where $\rho, \mu, \omega, \gamma\in \mathbb{C}$, with
			    $\Re(\rho),\Re(\mu)>0$.
			\end{definition}
			We also recall that the left-inverse to the operator 
			\eqref{pra}, the Prabhakar derivative, was introduced in \cite{Kil}. We define it below in a slightly different form.
			\begin{definition}[Prabhakar derivative]
			    Let $f \in L^1[0,b]$, $0 < x < b \le \infty$, and $f \ast e_{\rho, m-\mu,\omega}^{-\gamma}(\cdot) \in W^{m,1}[0,b]$,
			    $m = \lceil \mu \rceil$.
			    The Prabhakar derivative is defined as
			    \begin{align}
			        \mathbf{D}^{\gamma}_{\rho, \mu, \omega, 0^+}f(x)
			        = \frac{d^m}{dx^m}\mathbf{E}^{-\gamma}_{\rho, m-\mu, \omega, 0^+}f(x)
			    \end{align}
			    where $\mu,\omega,\gamma,\rho \in \mathbb{C}$, $\Re(\mu),\Re(\rho)>0$.
			\end{definition}
			Observing
			now that the Riemann--Liouville integrals in \eqref{rlder} can be  expressed
			in terms of Prabhakar integrals as
			\begin{equation}
				I_{0^+}^{m-(\mu+\theta)}f(x)= \mathbf{E}^0_{\rho,
				m-(\mu+\theta),\omega, 0^+}f (x),
			\end{equation}
			we have that,
			\begin{align}
			    \label{pde1}
			    \mathbf{D}^{\gamma}_{\rho, \mu, \omega, 0^+}f(x)
			    & = \frac{d^m}{dx^m}\mathbf{E}^{-\gamma}_{\rho, m-\mu, \omega,
			    0^+}f(x)\\
			    \nonumber &= \frac{d^m}{dx^m}I^{m-(\mu+\theta)}\mathbf{E}%
			    ^{-\gamma}_{\rho, \theta, \omega, 0^+}f(x) \\
			    & = D^{\mu+\theta}_{0^+} \mathbf{E}^{-\gamma}_{\rho, \theta,
			    \omega, 0^+}f(x), \qquad \theta \in \mathbb{C}, \: \Re(\theta)>0, \notag
			\end{align}
			where we used the fact that (see \cite{Kil}, Theorem 8)
			\begin{equation}
				\label{smi}
				\mathbf{E}_{\rho, \mu,\omega, 0^+}^{\gamma}\mathbf{E}_{\rho, \nu,\omega,
				0^+}^{\sigma}f(x) =\mathbf{E}_{\rho, \mu+\nu,\omega, 0^+}^{\gamma+\sigma}
				f(x).
			\end{equation}
			Note that formula \eqref{pde1} concides with the definition given by \cite{Kil}.
			As expected, the inverse operator \eqref{pde1} of the Prabhakar  integral
			generalizes the Riemann--Liouville  derivative. Its
			regularized Caputo counterpart is given, for functions $f \in AC^m[0,b]$, $0 < x < b \le \infty$, by
			\begin{align}
				\label{capl}
				{}^C\mathbf{D}^{\gamma}_{\rho, \mu, \omega,
				0^+}f(x)&=\mathbf{E}^{-\gamma}_{\rho, m-\mu, \omega, 0^+}\frac{d^m}{dx^m}
				f(x) \\
				&=\mathbf{D}^{\gamma}_{\rho, \mu, \omega,
				0^+}f(x)- \sum_{k=0}^{m-1} x^{k-\mu} E^{-\gamma}_{\rho,k-\mu+1}(\omega x^{\rho}) f^{(k)}(0^+). \notag
			\end{align}
			\begin{prop}
			    Let $\mu>0$ and $f \in AC^m[0,b]$, $0 < x < b \le \infty$. Then
			    \begin{align}
			        {}^C\mathbf{D}^{\gamma}_{\rho, \mu, \omega,0^+}f(x) = \mathbf{D}^{\gamma}_{\rho, \mu, \omega,0^+}
			        \left[ f(x) - \sum_{k=0}^{m-1}\frac{x^k}{k!}f^{(k)}(0^+) \right].
			    \end{align}
			\end{prop}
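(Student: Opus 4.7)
The plan is to take the existing identity \eqref{capl} as the starting point and reduce the proposition to a computation of $\mathbf{D}^{\gamma}_{\rho,\mu,\omega,0^+}$ applied to the Taylor polynomial of $f$ at the origin. Rearranging \eqref{capl}, the claim is equivalent to
\[
\mathbf{D}^{\gamma}_{\rho,\mu,\omega,0^+}\!\left[\sum_{k=0}^{m-1}\frac{x^k}{k!}f^{(k)}(0^+)\right]
= \sum_{k=0}^{m-1} x^{k-\mu}\, E^{-\gamma}_{\rho,k-\mu+1}(\omega x^\rho)\, f^{(k)}(0^+),
\]
so by linearity of the Prabhakar derivative it is enough to establish the single-monomial identity
\[
\mathbf{D}^{\gamma}_{\rho,\mu,\omega,0^+}\!\left(\frac{x^k}{k!}\right)
= x^{k-\mu}\, E^{-\gamma}_{\rho,k-\mu+1}(\omega x^\rho), \qquad 0\le k \le m-1.
\]

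To prove this monomial identity I would first compute the Prabhakar integral $\mathbf{E}^{-\gamma}_{\rho,m-\mu,\omega,0^+}x^k$ by plugging in the series definition of $E^{-\gamma}_{\rho,m-\mu}$, interchanging sum and integral, and evaluating each $\int_0^x (x-y)^{m-\mu-1+\rho n}y^k\,dy$ as a Beta function. The $\Gamma(m-\mu+\rho n)$ from the Beta function cancels the one appearing in the denominator of the Mittag--Leffler coefficient, and after reassembling the series I obtain the clean closed form
\[
\mathbf{E}^{-\gamma}_{\rho,m-\mu,\omega,0^+} x^k = k!\, x^{m-\mu+k}\, E^{-\gamma}_{\rho,m-\mu+k+1}(\omega x^\rho).
\]
Applying $d^m/dx^m$ term by term to the corresponding power series, each power $x^{m-\mu+\rho n+k}$ produces the factor $\Gamma(m-\mu+\rho n+k+1)/\Gamma(-\mu+\rho n+k+1)$, which telescopes against the Mittag--Leffler denominator and leaves precisely $x^{k-\mu}\,E^{-\gamma}_{\rho,k-\mu+1}(\omega x^\rho)$, as required.

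The genuinely routine steps are the Beta-integral evaluation and the gamma-function bookkeeping. The only step requiring a bit of care is the justification of the interchange of summation with the integral and with $d^m/dx^m$: this is where I expect the main (but still mild) obstacle. Both interchanges are justified by the entire-function growth of $E^{-\gamma}_{\rho,\beta}$ on compact $x$-intervals, which yields absolute and uniform convergence of the relevant series of continuous (respectively $C^\infty$) terms; alternatively one can appeal to the semigroup identity \eqref{smi} together with the representation $\mathbf{D}^{\gamma}_{\rho,\mu,\omega,0^+} = D^{\mu+\theta}_{0^+}\mathbf{E}^{-\gamma}_{\rho,\theta,\omega,0^+}$ from \eqref{pde1} to avoid differentiating the series directly. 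Combining the monomial identity with \eqref{capl} and linearity then yields the proposition.
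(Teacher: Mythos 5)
Your argument is correct, but it is not the route the paper takes: the paper's entire proof is the one-line remark that the claim ``easily follows from \eqref{pde1} and Corollary 2.3 of \cite{Kil}'', i.e.\ it factors the Prabhakar derivative through a Riemann--Liouville derivative as $\mathbf{D}^{\gamma}_{\rho,\mu,\omega,0^+}=D^{\mu+\theta}_{0^+}\mathbf{E}^{-\gamma}_{\rho,\theta,\omega,0^+}$ and then imports the analogous Riemann--Liouville/Caputo regularization identity from Kilbas--Saigo--Saxena. You instead take the already-displayed relation \eqref{capl} as the starting point, observe that the proposition is by linearity equivalent to the single-monomial identity $\mathbf{D}^{\gamma}_{\rho,\mu,\omega,0^+}(x^k/k!)=x^{k-\mu}E^{-\gamma}_{\rho,k-\mu+1}(\omega x^{\rho})$, and verify that identity by a direct Beta-integral and term-by-term differentiation computation. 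The computation checks out: $\mathbf{E}^{-\gamma}_{\rho,m-\mu,\omega,0^+}x^k=k!\,x^{m-\mu+k}E^{-\gamma}_{\rho,m-\mu+k+1}(\omega x^{\rho})$ is the standard action of the Prabhakar integral on powers, and applying $d^m/dx^m$ termwise telescopes the gamma factors exactly as you describe; the interchanges are harmless since $1/\Gamma$ is entire and the series converge absolutely and uniformly on compact subsets of $(0,b)$. What your approach buys is a self-contained, fully elementary proof that does not lean on an external corollary; what it costs is that it silently inherits the second equality of \eqref{capl}, which the paper also states without proof, so in both treatments the real content (the regularization formula for the Prabhakar derivative) is established by computing its action on the Taylor polynomial of $f$ at the origin --- you just make that computation explicit where the paper outsources it.
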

			\begin{proof}
			    It easily follows from \eqref{pde1} and Corollary 2.3 of \cite{Kil}.
			\end{proof}

			We are now ready to define the Hilfer--Prabhakar derivative,  interpolating %
			\eqref{capl} and \eqref{pde1}.
			\begin{definition}[Hilfer--Prabhakar derivative]
				\label{ge}
			    Let $\mu\in (0,1)$, $\nu \in[0,1]$, and let $f \in L^1[a,b]$,
			    $0 < t < b \le \infty$, $f \ast e_{\rho, (1-\nu)(1-\mu),\omega}^{-\gamma(1-\nu)}(\cdot) \in AC^1[0,b]$.
				The Hilfer--Prabhakar derivative is defined by
				\begin{equation}
					\label{hilg}
					\mathcal{D}^{\gamma, \mu, \nu}_{\rho, \omega, 0^+} f(t) =\left(\mathbf{E}%
					_{\rho, \nu(1-\mu),\omega, 0^+}^{-\gamma \nu}\frac{d}{dt}( \mathbf{E}_{\rho,
					(1-\nu)(1-\mu),\omega, 0^+}^{-\gamma(1-\nu)}f)\right)(t),
				\end{equation}
				where $\gamma, \omega \in \mathbb{R}$, $\rho > 0$, and where $\mathbf{E}_{\rho, 0,\omega, 0^+}^0 f = f$.
			\end{definition}

			We observe that \eqref{hilg} reduces to the Hilfer derivative for $\gamma=0$.
			Moreover, for $\nu =1$ and $\nu = 0$ it coincides with \eqref{capl} and %
			\eqref{pde1}, respectively (note that $m=1$).
			
			\begin{lem}
				\label{l1}  The Laplace transform of \eqref{hilg} is given by
				\begin{align}
					\label{ltra}
					& \mathcal{L}\left(\mathbf{E}_{\rho, \nu(1-\mu),\omega, 0^+}^{-\gamma\nu}\frac{d%
					}{dt}( \mathbf{E}_{\rho, (1-\nu)(1-\mu),\omega, 0^+}^{-\gamma(1-\nu)}f)\right)(s) \\
					& = s^{\mu}[1-\omega s^{-\rho}]^{\gamma}\mathcal{L}[f](s)-
					s^{-\nu(1-\mu)}[1-\omega s^{-\rho}]^{\gamma\nu}\left[ \mathbf{E}%
					^{-\gamma(1-\nu)}_{\rho,(1-\nu)(1-\mu), \omega, 0^+}f (t)\right]_{t=0^+}.
					\notag
				\end{align}
			\end{lem}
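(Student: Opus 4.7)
The plan is to compute the Laplace transform by peeling off the two Prabhakar integrals (inner and outer) and the first derivative one layer at a time, using that a Prabhakar integral is a convolution with the kernel $e^{\gamma}_{\rho,\mu,\omega}$. The key ingredient is the Laplace transform of the Prabhakar kernel,
\begin{align*}
\mathcal{L}\!\left[t^{\mu-1}E^{\gamma}_{\rho,\mu}(\omega t^{\rho})\right]\!(s)
= s^{-\mu}(1-\omega s^{-\rho})^{-\gamma},
\end{align*}
valid for $\Re(s)>0$ and $|\omega s^{-\rho}|<1$; this is the classical Prabhakar formula and follows from termwise Laplace inversion of the three-parameter Mittag--Leffler series. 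Consequently, since $\mathbf{E}^{\sigma}_{\rho,\eta,\omega,0^+}f = f\ast e^{\sigma}_{\rho,\eta,\omega}$, the convolution theorem yields
\begin{align*}
\mathcal{L}\!\left[\mathbf{E}^{\sigma}_{\rho,\eta,\omega,0^+}f\right]\!(s)
= s^{-\eta}(1-\omega s^{-\rho})^{-\sigma}\,\mathcal{L}[f](s).
\end{align*}

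With this in hand, I would first set $g(t)=\mathbf{E}^{-\gamma(1-\nu)}_{\rho,(1-\nu)(1-\mu),\omega,0^+}f(t)$ and record
\begin{align*}
\mathcal{L}[g](s)=s^{-(1-\nu)(1-\mu)}(1-\omega s^{-\rho})^{\gamma(1-\nu)}\mathcal{L}[f](s).
\end{align*}
Since $f\ast e^{-\gamma(1-\nu)}_{\rho,(1-\nu)(1-\mu),\omega}\in AC^{1}[0,b]$ by hypothesis, $g$ is absolutely continuous on $[0,b]$, so the standard rule $\mathcal{L}[g'](s)=s\mathcal{L}[g](s)-g(0^+)$ applies. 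Substituting,
\begin{align*}
\mathcal{L}[g'](s)=s^{1-(1-\nu)(1-\mu)}(1-\omega s^{-\rho})^{\gamma(1-\nu)}\mathcal{L}[f](s)-g(0^+).
\end{align*}

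Finally, I apply the Laplace transform of the outer Prabhakar integral $\mathbf{E}^{-\gamma\nu}_{\rho,\nu(1-\mu),\omega,0^+}$ to $g'$, which simply multiplies by $s^{-\nu(1-\mu)}(1-\omega s^{-\rho})^{\gamma\nu}$. Collecting exponents, the $s$-exponent on the first term becomes $1-(1-\nu)(1-\mu)-\nu(1-\mu)=\mu$ and the $(1-\omega s^{-\rho})$-exponent becomes $\gamma(1-\nu)+\gamma\nu=\gamma$, reproducing the principal term $s^{\mu}(1-\omega s^{-\rho})^{\gamma}\mathcal{L}[f](s)$; the boundary contribution is exactly $-s^{-\nu(1-\mu)}(1-\omega s^{-\rho})^{\gamma\nu}g(0^+)$, matching the right-hand side of \eqref{ltra}.

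There is no conceptual obstacle in this proof; the only thing to be careful about is bookkeeping the exponents of $s$ and of $1-\omega s^{-\rho}$ correctly, and justifying the convergence of the Laplace transform of the Prabhakar kernel in the domain $|\omega s^{-\rho}|<1$. One could also verify that the conditions $f\in L^{1}[0,b]$ and $f\ast e^{-\gamma(1-\nu)}_{\rho,(1-\nu)(1-\mu),\omega}\in AC^{1}[0,b]$ are precisely what is needed to make $g(0^+)$ meaningful and $g'$ Laplace-transformable.
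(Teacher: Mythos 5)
Your proposal is correct and follows essentially the same route as the paper: both rest on the Laplace transform of the Prabhakar kernel $\mathcal{L}[t^{\eta-1}E^{\sigma}_{\rho,\eta}(\omega t^{\rho})](s)=s^{-\eta}(1-\omega s^{-\rho})^{-\sigma}$ (formula (2.19) of Kilbas--Saigo--Saxena), the convolution theorem applied to the two Prabhakar integrals, and the rule $\mathcal{L}[g'](s)=s\mathcal{L}[g](s)-g(0^{+})$ for the inner derivative, after which the exponents collapse to $\mu$ and $\gamma$. The only difference is that you peel the operators from the inside out while the paper works from the outside in, and your added remark on why the $AC^{1}$ hypothesis makes $g(0^{+})$ meaningful is a small bonus the paper leaves implicit.
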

			\begin{proof}
				By recurring to formula (2.19) of \cite{Kil}, i.e.
				\begin{equation}
					\label{lap}
					\mathcal{L}\left[t^{\mu-1}E^{-\gamma}_{\rho, \mu}(\omega t^{\rho})\right](s)
					=s^{-\mu}(1-\omega s^{-\rho})^{\gamma}, \qquad \gamma, \omega, \rho, \mu\in
					\mathbb{C}, \: \Re(\mu)>0,
				\end{equation}
				with $s \in \mathbb{C}$, $\Re(s)>0$, $|\omega s^{-\rho}|<1$.
				We can write
				\begin{align}
					\mathcal{L} & \left(\mathbf{E}_{\rho, \nu(1-\mu),\omega, 0^+}^{-\gamma\nu}\frac{%
					d}{dt}( \mathbf{E}_{\rho, (1-\nu)(1-\mu),\omega, 0^+}^{-\gamma(1-\nu)}f)\right)(s)
					\\
					= {} & \mathcal{L}\left[t^{\nu(1-\mu)-1}E^{-\gamma\nu}_{\rho,
					\nu(1-\mu)}(\omega t^{\rho})\right](s) \cdot \mathcal{L}\left[\frac{d}{dt} (%
					\mathbf{E}_{\rho, (1-\nu)(1-\mu),\omega, 0^+}^{-\gamma(1-\nu)}f)\right](s)  \notag
					\\
					= {} & s^{-\nu(1-\mu)}[1-\omega s^{-\rho}]^{\gamma\nu}s \, \mathcal{L}\left[
					t^{(1-\nu)(1-\mu)-1}E^{-\gamma(1-\nu)}_{\rho, (1-\nu)(1-\mu)}(\omega t^{\rho}) %
					\right] (s) \: \mathcal{L}[f](s)  \notag \\
					& -s^{-\nu(1-\mu)}[1-\omega s^{-\rho}]^{\gamma\nu} \left[ \mathbf{E}%
					^{-\gamma(1-\nu)}_{\rho,(1-\nu)(1-\mu), \omega, 0^+}f (t)\right]_{t=0^+}
					\notag \\
					= {} & s^{\mu}[1-\omega s^{-\rho}]^{\gamma}\mathcal{L}[f](s)-
					s^{-\nu(1-\mu)}[1-\omega s^{-\rho}]^{\gamma\nu}\left[ \mathbf{E}%
					^{-\gamma(1-\nu)}_{\rho,(1-\nu)(1-\mu), \omega, 0^+}f (t)\right]_{t=0^+}.
					\notag
				\end{align}
			\end{proof}

			In order to consider Cauchy problems involving initial conditions depending
			only on the function and its integer-order  derivatives we  use the
			regularized version of \eqref{hilg}, that is, for $f \in AC^1[0,b]$, we have
			\begin{align}
				\label{regn}
				{}^C\mathcal{D}^{\gamma, \mu}_{\rho, \omega, 0^+} f(t) =\left(\mathbf{E}_{\rho,
				\nu(1-\mu), \omega, 0^+}^{-\gamma\nu} \mathbf{E}_{\rho, (1-\nu)(1-\mu),\omega,
				0^+}^{-\gamma(1-\nu)} \frac{d}{dt}f \right)(t) = \left(\mathbf{E}_{\rho,
				1-\mu,\omega, 0^+}^{-\gamma} \frac{d}{dt}f\right)(t),
			\end{align}
			We remark that, in the regularized version of the Hilfer--Prabhakar
			derivative (as well as in the regularized Hilfer derivative---see %
			\eqref{lenovo}),  there is no dependence on the interpolating parameter  $\nu$.

			\begin{lem}
				\label{l2}  The Laplace transform of the operator \eqref{regn} is given by			
				\begin{align}
					\label{penna}
					\mathcal{L}[{}^C\mathcal{D}^{\gamma, \mu}_{\rho, \omega, 0^+} f](s)
					=s^{\mu}[1-\omega s^{-\rho}]^{\gamma}\mathcal{L}[f](s)- s^{\mu-1}[1-\omega
					s^{-\rho}]^{\gamma}f(0^+).
				\end{align}
			\end{lem}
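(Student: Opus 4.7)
The plan is to treat the regularized operator as a simple convolution and apply the Laplace convolution theorem together with the Prabhakar transform formula \eqref{lap} already used in the proof of Lemma \ref{l1}. From the last equality in \eqref{regn} we have
\[
{}^C\mathcal{D}^{\gamma, \mu}_{\rho, \omega, 0^+} f(t) = \bigl(\mathbf{E}_{\rho, 1-\mu,\omega, 0^+}^{-\gamma} f'\bigr)(t) = \bigl(e^{-\gamma}_{\rho, 1-\mu,\omega} \ast f'\bigr)(t),
\]
so the Laplace transform factorises as $\mathcal{L}[e^{-\gamma}_{\rho,1-\mu,\omega}](s)\cdot \mathcal{L}[f'](s)$.

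First I would compute the transform of the kernel via \eqref{lap} with parameters $(-\gamma,1-\mu)$ in place of $(-\gamma,\mu)$, obtaining $\mathcal{L}[e^{-\gamma}_{\rho,1-\mu,\omega}](s) = s^{\mu-1}[1-\omega s^{-\rho}]^{\gamma}$. Then, since $f\in AC^1[0,b]$, the standard rule $\mathcal{L}[f'](s) = s\,\mathcal{L}[f](s) - f(0^+)$ applies. Multiplying the two transforms gives
\[
s^{\mu-1}[1-\omega s^{-\rho}]^{\gamma}\bigl(s\,\mathcal{L}[f](s)-f(0^+)\bigr) = s^{\mu}[1-\omega s^{-\rho}]^{\gamma}\mathcal{L}[f](s)-s^{\mu-1}[1-\omega s^{-\rho}]^{\gamma}f(0^+),
\]
which is exactly \eqref{penna}.

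There is no real obstacle here: everything is already prepared by Lemma \ref{l1} and by the identity \eqref{regn} that collapses the two Prabhakar integrals into a single one of order $1-\mu$ with parameter $-\gamma$ (via the semigroup property \eqref{smi}). The only mild point to flag is the convergence condition $|\omega s^{-\rho}|<1$ inherited from \eqref{lap}, under which the computation is valid; the resulting identity then extends by analytic continuation in $s$ in the usual way.
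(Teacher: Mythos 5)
Your proof is correct and is exactly the computation the paper intends: the paper's proof of Lemma \ref{l2} merely states that it ``ensues from similar calculations to those in Lemma \ref{l1}'', and your argument --- writing \eqref{regn} as the single convolution $e^{-\gamma}_{\rho,1-\mu,\omega}\ast f'$, applying \eqref{lap} with $1-\mu$ in place of $\mu$, and using $\mathcal{L}[f'](s)=s\,\mathcal{L}[f](s)-f(0^+)$ --- is precisely that calculation carried out. No gaps.
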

			\begin{proof}
				It ensues from similar calculations to those in Lemma \ref{l1}.
			\end{proof}

			From Lemmas \ref{l1} and \ref{l2} we have that the relation between the two
			operators  \eqref{hilg} and \eqref{regn} is given by
			\begin{equation}
				{}^C\mathcal{D}^{\gamma, \mu}_{\rho, \omega, 0^+} f(t) = \mathcal{D}^{\gamma,
				\mu, \nu}_{\rho, \omega, 0^+} f(t) -
				t^{-\mu}E^{-\gamma}_{\rho,1-\mu}(\omega t^{\rho})f(0^+),
			\end{equation}
			observing that, for absolutely continuous functions $f \in AC^1[0,b]$,
			\begin{equation}
				\left[\mathbf{E}^{-\gamma(1-\nu)}_{\rho,(1-\nu)(1-\mu), \omega, 0^+}f
				(t)\right]_{t=0^+}=0,
			\end{equation}
			and
			\begin{equation}
				\mathcal{L}^{-1}[s^{\mu-1}[1-\omega s^{-\rho}]^{\gamma}](t) \: f(0^+)
				=t^{-\mu}E^{-\gamma}_{\rho,1-\mu}(\omega t^{\rho}) \: f(0^+).
			\end{equation}

		\section{Applications}

			\label{appll}
			We show below some applications of Hilfer--Prabhakar
			derivatives in equations of interest for mathematical physics and probability.

			\subsection{Time-fractional heat equation}

				In the recent years more and more papers have been devoted to the mathematical
				analysis of versions of the time-fractional heat equation and to the study of its
				applications in mathematical physics and
				probability theory (see for example \cite{pa,1,2,3,4,5} and the references therein).
								
				Here we study a generalization of the time-fractional heat equation involving
				Hilfer--Prabhakar derivatives.  We present analytical results for the
				time-fractional heat equation involving both regularized and non regularized
				Hilfer--Prabhakar derivatives in order to highlight the main differences between the
				two cases.
				
				We start by considering the fractional heat equation  involving the
				non-regularized operator $\mathcal{D}^{\gamma, \mu, \nu}_{\rho, \omega,
				0^+}$.

				\begin{te}
					\label{prim}
					The solution to the Cauchy problem
					\begin{equation}
						\label{heo}
						\begin{cases}
							\mathcal{D}_{\rho ,\omega ,0^{+}}^{\gamma ,\mu ,\nu }u(x,t)=K\frac{\partial
							^{2}}{\partial x^{2}}u(x,t), & t>0,\:x\in \mathbb{R}, \\
							\left[ \mathbf{E}_{\rho ,(1-\nu )(1-\mu ),\omega ,0^{+}}^{-\gamma(1-\nu)
							}u(x,t)\right] _{t=0^{+}}=g(x), &  \\
							\lim_{x\rightarrow \pm \infty }u(x,t)=0, &
						\end{cases}
					\end{equation}%
					with $\mu \in (0,1)$, $\nu \in [0,1]$, $\omega \in \mathbb{R}$, $K,\rho>0$, $\gamma \ge 0$, is given by
					\begin{equation}
						u(x,t)=\int_{-\infty }^{+\infty
						} dk \, e^{-ikx}\hat{g}(k)\frac{1}{2\pi }\sum_{n=0}^{\infty }\left( -K\right) ^{n}t^{\mu \left(
						n+1\right) -\nu (\mu -1)-1}E_{\rho ,\mu \left( n+1\right) -\nu (\mu
						-1)}^{\gamma \left( n+1-\nu\right) }(\omega t^{\rho })k^{2n}.
					\end{equation}
				\end{te}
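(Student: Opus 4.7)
The plan is to combine a Fourier transform in $x$ with a Laplace transform in $t$, solve the resulting algebraic equation for the double transform, expand as a geometric series, and invert term by term. I use the convention $\hat{g}(k)=\int_{-\infty}^{+\infty} e^{ikx} g(x)\, dx$ so that $\partial_x^2$ becomes multiplication by $-k^2$ and the decay condition $u\to 0$ at $\pm\infty$ justifies applying $\mathcal{F}$ to both sides. The initial condition $[\mathbf{E}^{-\gamma(1-\nu)}_{\rho,(1-\nu)(1-\mu),\omega,0^+}u(x,t)]_{t=0^+}=g(x)$ is tailored exactly to the boundary term produced by Lemma \ref{l1}, so this choice of transform is the natural one.

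After Fourier transforming in $x$ and Laplace transforming in $t$, the equation becomes, using Lemma \ref{l1},
\begin{equation*}
s^{\mu}[1-\omega s^{-\rho}]^{\gamma}\,\tilde{\hat{u}}(k,s) - s^{-\nu(1-\mu)}[1-\omega s^{-\rho}]^{\gamma\nu}\,\hat g(k) = -K k^{2}\, \tilde{\hat{u}}(k,s),
\end{equation*}
whence
\begin{equation*}
\tilde{\hat{u}}(k,s) = \frac{s^{-\nu(1-\mu)}[1-\omega s^{-\rho}]^{\gamma\nu}\,\hat g(k)}{s^{\mu}[1-\omega s^{-\rho}]^{\gamma} + Kk^{2}}.
\end{equation*}
I would then factor $s^{\mu}[1-\omega s^{-\rho}]^{\gamma}$ out of the denominator and expand the remaining $1/(1+z)$ as a geometric series in $z = Kk^{2}\,s^{-\mu}[1-\omega s^{-\rho}]^{-\gamma}$, valid for $\Re(s)$ large enough and $|\omega s^{-\rho}|<1$. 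This yields
\begin{equation*}
\tilde{\hat{u}}(k,s) = \hat g(k)\sum_{n=0}^{\infty}(-Kk^{2})^{n}\, s^{-[\mu(n+1)-\nu(\mu-1)]}\,[1-\omega s^{-\rho}]^{-\gamma(n+1-\nu)}.
\end{equation*}

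Each term is precisely of the form $s^{-A}(1-\omega s^{-\rho})^{B}$ with $A=\mu(n+1)-\nu(\mu-1)$ and $B=-\gamma(n+1-\nu)$, so by formula \eqref{lap} its inverse Laplace transform is $t^{A-1}E^{-B}_{\rho,A}(\omega t^{\rho}) = t^{\mu(n+1)-\nu(\mu-1)-1}E^{\gamma(n+1-\nu)}_{\rho,\mu(n+1)-\nu(\mu-1)}(\omega t^{\rho})$. Finally I invert the Fourier transform, $u(x,t)=\tfrac{1}{2\pi}\int e^{-ikx}\hat u(k,t)\,dk$, and obtain the claimed representation.

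The routine parts are the two transforms and the geometric expansion; the point requiring a little care is justifying the term-by-term Laplace inversion and interchange of sum and Fourier integral. This is standard under mild assumptions on $\hat g$ (so that $\hat g(k)k^{2n}$ is integrable uniformly, or at least in a distributional sense) together with the absolute convergence of the Mittag--Leffler series for $|\omega s^{-\rho}|<1$ and entire-function growth estimates on $E^{\gamma(n+1-\nu)}_{\rho,\mu(n+1)-\nu(\mu-1)}$; I would simply remark that under suitable regularity/decay of $g$ these interchanges are legitimate, which is the only genuinely analytic issue in the argument.
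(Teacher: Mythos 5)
Your proposal is correct and follows essentially the same route as the paper: Fourier--Laplace transform, application of Lemma \ref{l1}, geometric-series expansion of the resolvent, term-by-term Laplace inversion via formula \eqref{lap}, and Fourier inversion. The only difference is that the paper spells out the justification you defer to a remark, proving absolute convergence of the resulting double series explicitly via Gamma-function asymptotics and invoking Theorem 30.1 of Doetsch for the term-by-term inversion.
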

				\begin{proof}
					We denote with $\tilde{u}(x,s)=\mathcal{L}(u)(x,s)$ the Laplace
					transform with respect to the time variable $t$ and $\hat{u}(k,t)=\mathcal{F}%
					(u)(k,t)$ the Fourier transform with respect to the space variable $x$.
					Taking the Fourier--Laplace transform of \eqref{heo}, by formula \eqref{ltra}, we have
					\begin{equation}
						s^{\mu }(1-\omega s^{-\rho })^{\gamma }\hat{\tilde{u}}(k,s)-s^{\nu (\mu
						-1)}(1-\omega s^{-\rho })^{\gamma \nu}\hat{g}(k)=-Kk^{2}\hat{\tilde{u}}(k,s),
					\end{equation}%
					so that
					\begin{align}
						\hat{\tilde{u}}(k,s)& =\frac{s^{\nu (\mu -1)}(1-\omega s^{-\rho })^{\gamma \nu}%
						\hat{g}(k)}{s^{\mu }(1-\omega s^{-\rho })^{\gamma
						}+Kk^{2}}\\
						\nonumber &=s^{-\mu
						+\nu (\mu -1)}(1-\omega s^{-\rho })^{-\gamma(1-\nu) }\hat{g}(k)  \left( 1+\frac{Kk^{2}}{s^{\mu }(1-\omega
						s^{-\rho })^{\gamma }} \right)^{-1} \\
						& =\sum_{n=0}^{\infty }\left( -Kk^{2}\right) ^{n}s^{-\mu \left( n+1\right)
						+\nu (\mu -1)}(1-\omega s^{-\rho })^{-\gamma \left( n+1-\nu\right) }\hat{g}%
						(k),\qquad \left\vert \frac{Kk^{2}}{s^{\mu }(1-\omega s^{-\rho })^{\gamma }}%
						\right\vert <1.  \notag
					\end{align}%
					Inverting first the Laplace transform it yields
					\begin{equation}
						\label{batata}
						\hat{u}(k,t)=\sum_{n=0}^{\infty }\left( -K\right) ^{n}t^{\mu \left(
						n+1\right) -\nu (\mu -1)-1}E_{\rho ,\mu \left( n+1\right) -\nu (\mu
						-1)}^{\gamma \left( n+1-\nu\right) }(\omega t^{\rho })k^{2n}\hat{g}(k).
					\end{equation}
					Note that the inversion term by term of the Laplace transform is possible in view of Theorem 30.1 by \citet{doetsch}
					provided to choose a sufficiently large abscissa for the inverse integral and by recalling that the generalized Mittag Leffler
					function is defined as an absolutely convergent series.
					The convergence of \eqref{batata} and in general of series of the same form
					(see below) can be proved by using the same technique as in
					Appendix C of \cite{tomtom}.
					Indeed the function \eqref{batata} is in fact a repeated series:
					\begin{align}
						\hat{u}(k,t)=\hat{g}(k)t^{ -\nu (\mu -1)-1+\mu}\sum_{n=0}^{\infty }
						\frac{\left( -K k^2\right) ^{n}t^{\mu n}}{\Gamma(\gamma(n+1-\nu))}
						\sum_{r=0}^\infty \frac{(\omega t^\rho)^r \Gamma(r+\gamma(n+1-\nu))}{r!
						\Gamma(\rho r+ \mu (n+1)-\nu(\mu-1))}.
					\end{align}
					Since the three-parameter generalized Mittag--Leffler is an entire function,
					in order to prove the absolute convergence of \eqref{batata} it is sufficient
					to show that for each $r \in \mathbb{N}\cup \{0\}$
					\begin{align}
						\label{cco}
						\sum_{n=0}^\infty \frac{(-Kk^2 t^\mu)^n \Gamma(r+\gamma(n+1-\nu))}{\Gamma(\gamma(n+1-\nu)) \Gamma(\rho r
						+\mu(n+1)-\nu(\mu-1))}
					\end{align}
					converges absolutely. We need to study the ratio
					\begin{align}
						\label{penna2}
						& \left| \frac{(-Kk^2t^\mu)^{n+1}\Gamma(r+\gamma(n+2-\nu))}{\Gamma(\gamma(n+2-\nu))
						\Gamma(\rho r+\mu(n+2)-\nu(\mu-1))} \frac{\Gamma(\gamma(n+1-\nu))
						\Gamma(\rho r+\mu(n+1)-\nu(\mu-1))}{(-Kk^2t^\mu)^n \Gamma(r+\gamma(n+1-\nu))} \right| \\
						& = \left| (-Kk^2 t^\mu) \right| \left| \frac{\Gamma(\gamma(n+1-\nu)+\gamma+r)}{
						\Gamma(\gamma(n+1-\nu)+\gamma)} \right| \left| \frac{\Gamma(\gamma(n+1-\nu))}{\Gamma(\gamma(n+1-\nu)+r)} \right|
						\left| \frac{\Gamma(\rho r+\mu(n+1)-\nu(\mu-1))}{\Gamma(\rho r+\mu(n+1)-\nu(\mu-1)+\mu)} \right| \notag \\
						& \approx \left| (-Kk^2 t^\mu) \right| \left[ \rho r+\mu(n+1)-\nu(\mu-1) \right]^{-\mu}. \notag
					\end{align}
					Last step of the above formula is valid for large values of $k$ and can be determined by means of the well-known
					asymptotics
					\begin{align}
						\frac{\Gamma(z+a)}{\Gamma(z+b)} = z^{a-b} \left[ 1+\frac{(a-b)(a+b-1)}{2z} + O(z^{-2})\right],
					\end{align}
					for $|z|\to\infty$, $|\arg(z)|\le \pi-\varepsilon$, $|\arg(z+a)|\le\pi-\varepsilon$, $0<\varepsilon< \pi$.
					Formula \eqref{penna2} is zero for $n \to \infty$ implying absolute convergence of \eqref{cco} and therefore
					of \eqref{batata}.					
					
					To conclude the proof of the theorem, by applying the inverse Fourier transform to \eqref{batata} we obtain the claimed result.
				\end{proof}

				We now discuss the case with the regularized Hilfer--Prabhakar derivative $%
				{}^C\mathcal{D}^{\gamma, \mu}_{\rho, \omega, 0^+}$.

				\begin{te}
					The solution to the Cauchy problem
					\begin{equation}
						\label{he1}
						\begin{cases}
							{}^C\mathcal{D}_{\rho ,\omega ,0^{+}}^{\gamma ,\mu }u(x,t)=K\frac{\partial ^{2}}{%
							\partial x^{2}}u(x,t), & t>0,\:x\in \mathbb{R}, \\
							u(x,0^+)=g(x), &  \\
							\lim_{x\rightarrow \pm \infty }u(x,t)=0, &
						\end{cases}
					\end{equation}%
					with $\mu \in (0,1)$, $\omega \in \mathbb{R}$, $K,\rho>0$, $\gamma\ge 0$, is given by
					\begin{equation}
						u(x,t)= \int_{-\infty }^{+\infty } dk \,  e^{-ikx}\hat{g}(k) \frac{1}{2\pi }\sum_{n=0}^{\infty }\left( -Kt^{\mu}\right)
						^{n}E_{\rho ,\mu n+1}^{\gamma n}\left( \omega t^{\rho
						}\right)k^{2n}.
					\end{equation}
				\end{te}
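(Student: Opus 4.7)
The proof will follow the same blueprint as Theorem \ref{prim}, but using Lemma \ref{l2} (the Laplace transform of the regularized operator) instead of Lemma \ref{l1}, so that the initial condition $u(x,0^+)=g(x)$ enters naturally. My plan is to apply the Fourier transform in $x$ and the Laplace transform in $t$ to \eqref{he1}, obtaining the algebraic equation
\begin{align}
s^{\mu}(1-\omega s^{-\rho})^{\gamma}\hat{\tilde{u}}(k,s) - s^{\mu-1}(1-\omega s^{-\rho})^{\gamma}\hat{g}(k) = -Kk^{2}\hat{\tilde{u}}(k,s),
\end{align}
and solve for $\hat{\tilde{u}}(k,s)$, getting
\begin{align}
\hat{\tilde{u}}(k,s) = \frac{s^{\mu-1}(1-\omega s^{-\rho})^{\gamma}\hat{g}(k)}{s^{\mu}(1-\omega s^{-\rho})^{\gamma}+Kk^{2}} = \hat{g}(k)\,s^{-1}\left(1+\frac{Kk^{2}}{s^{\mu}(1-\omega s^{-\rho})^{\gamma}}\right)^{-1}.
\end{align}

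Next I would expand the last factor as a geometric series, valid in the region $|Kk^{2}/(s^{\mu}(1-\omega s^{-\rho})^{\gamma})|<1$, yielding
\begin{align}
\hat{\tilde{u}}(k,s) = \hat{g}(k)\sum_{n=0}^{\infty}(-Kk^{2})^{n}\,s^{-\mu n-1}(1-\omega s^{-\rho})^{-\gamma n}.
\end{align}
I would then invert the Laplace transform term by term via formula \eqref{lap} with the identifications $\mu \mapsto \mu n+1$ and $-\gamma \mapsto \gamma n$, so that each summand becomes $t^{\mu n}E^{\gamma n}_{\rho,\mu n+1}(\omega t^{\rho})$. Term-by-term inversion is justified by the same reference to Doetsch's Theorem 30.1 used in Theorem \ref{prim}.

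The only nontrivial step is justifying absolute convergence of the resulting double series
\begin{align}
\hat{u}(k,t) = \hat{g}(k)\sum_{n=0}^{\infty}(-Kt^{\mu})^{n}E^{\gamma n}_{\rho,\mu n+1}(\omega t^{\rho})k^{2n},
\end{align}
but this is actually simpler than the corresponding analysis in Theorem \ref{prim}: I would repeat the ratio-test argument from \eqref{penna2}, which relies only on the standard asymptotics $\Gamma(z+a)/\Gamma(z+b)\sim z^{a-b}$. The structure of the ratio is the same (one power of $t^{\mu}$, one extra power of $\mu$ in the denominator gamma function), so absolute convergence follows identically. Finally, applying the inverse Fourier transform produces the stated integral representation for $u(x,t)$, completing the proof. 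The main (minor) obstacle is bookkeeping the indices when matching \eqref{lap} to the series terms; everything else is a direct specialization of the previous argument.
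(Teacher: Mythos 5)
Your proposal is correct and follows essentially the same route as the paper's own proof: Fourier--Laplace transform via Lemma \ref{l2}, geometric series expansion under the condition $|Kk^{2}/(s^{\mu}(1-\omega s^{-\rho})^{\gamma})|<1$, term-by-term Laplace inversion via \eqref{lap}, and convergence handled by deferring to the ratio-test argument of Theorem \ref{prim}. No substantive differences.
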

				\begin{proof}
					Taking the Fourier--Laplace transform of \eqref{he1}, by formula %
					\eqref{penna}, we have that
					\begin{equation}
						s^{\mu }(1-\omega s^{-\rho })^{\gamma }\hat{\tilde{u}}(k,s)-s^{\mu
						-1}(1-\omega s^{-\rho })^{\gamma }\hat{g}(k)=-Kk^{2}\hat{\tilde{u}}(k,s),
					\end{equation}%
					so that
					\begin{align}
						\hat{\tilde{u}}(k,s)& =\frac{s^{\mu -1}(1-\omega s^{-\rho })^{\gamma }\hat{g%
						}(k)}{s^{\mu }(1-\omega s^{-\rho })^{\gamma }+Kk^{2}}
						=s^{-1}\hat{g}(k)  \left( 1+\frac{Kk^{2}}{s^{\mu}(1-\omega s^{-\rho })^{\gamma }} \right)^{-1} \\
						& =\sum_{n=0}^{\infty }\left( -Kk^{2}\right) ^{n}s^{- \mu
						n-1}(1-\omega s^{-\rho })^{-\gamma n}\hat{g}(k),\qquad \left\vert \frac{%
						Kk^{2}}{s^{\mu}(1-\omega s^{-\rho })^{\gamma }}\right\vert <1.  \notag
					\end{align}%
					Inverting first the Laplace transform (see the proof of Theorem \ref{prim} for more information on the inversion) it yields
					\begin{equation}
						\hat{u}(k,t)=\sum_{n=0}^{\infty }\left( -Kt^{\mu}\right) ^{n}
						E_{\rho, \mu n+1}^{\gamma n}(\omega t^{\rho })k^{2n}\hat{g}(k).
					\end{equation}%
					By applying the inverse Fourier transform we obtain the claimed result.
				\end{proof}


			\subsection{Fractional free electron laser equation}

				The free electron laser integro-differential equation
				\begin{equation}
					\begin{cases}
						\frac{dy}{dx}=-i\pi g\int_0^x (x-t)e^{i\eta(x-t)}y(t)dt,\qquad g,\eta\in
						\mathbb{R}, \: x\in (0,1], \\
						y(0)=1,%
					\end{cases}%
				\end{equation}
				describes the unsaturated behavior of the free electron laser (FEL) (see for
				example \cite{dat}). In recent years  many attempts to solve the
				generalized fractional integro-differential FEL equation have been proposed
				(see for example  \cite{kil1}). Here we consider the following fractional
				generalization of the FEL equation, involving Hilfer--Prabhakar derivatives.
				\begin{equation}
					\label{he4}
					\begin{cases}
						\mathcal{D}^{\gamma, \mu, \nu}_{\rho, \omega, 0^+}y(x)=\lambda
						\mathbf{E}^{\varpi}_{\rho,\mu, \omega, 0^+}y(x)+f(x), & x \in (0,\infty), \: f(x) \in L^1[0,\infty), \\
						\left[ \mathbf{E}^{-\gamma(1-\nu)}_{\rho,(1-\nu)(1-\mu), \omega,
						0^+}y(x)\right]_{x=0^+}=\kappa, & \kappa \ge 0,
					\end{cases}
				\end{equation}
				where $\mu \in (0,1)$, $\nu \in [0,1]$, $\omega,\lambda \in \mathbb{C}$, $\rho>0$, $\gamma,\varpi \ge 0$.
				This generalizes the problem studied in \cite{kil1}, corresponding to
				$\nu=\gamma=0$. Here $f(x)$ is a given function.  The original FEL
				equation is then retrieved for $\gamma=0$, $\nu=0$, $\mu \to 1$, $f\equiv 0$, $%
				\lambda = -i\pi g$, $\omega= i \eta$, $\rho=\varpi=\kappa=1$.
				
				We have the following
				
				\begin{te}
					The solution to the Cauchy problem \eqref{he4} is given by
					\begin{equation}
						y(x)=\kappa \sum_{k=0}^\infty \lambda^k x^{\nu(1-\mu)+\mu+2\mu k-1} E_{\rho,\nu(1-\mu)+\mu+2k\mu}^{\gamma+k(\varpi +\gamma)-\gamma
						\nu}(\omega x^\rho) +
						\sum_{k=0}^{\infty }\lambda^{k} \mathbf{E}_{\rho ,\mu
						(2k+1),\omega ,0^{+}}^{\gamma +k(\varpi+\gamma)}f (x).
					\end{equation}
				\end{te}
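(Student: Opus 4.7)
The natural plan is to solve the problem via the Laplace transform, mirroring the approach in Theorem 5.1. I would apply $\mathcal{L}$ to both sides of the equation. Using Lemma \ref{l1} the left-hand side becomes
\begin{equation*}
s^{\mu}(1-\omega s^{-\rho})^{\gamma}\tilde{y}(s)-s^{-\nu(1-\mu)}(1-\omega s^{-\rho})^{\gamma\nu}\kappa,
\end{equation*}
while formula \eqref{lap} (i.e.\ (2.19) of \cite{Kil}) combined with the convolution theorem gives
\begin{equation*}
\mathcal{L}\left[\mathbf{E}^{\varpi}_{\rho,\mu,\omega,0^+}y\right](s)=s^{-\mu}(1-\omega s^{-\rho})^{-\varpi}\tilde{y}(s).
\end{equation*}

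Next I would solve the resulting linear algebraic equation for $\tilde{y}(s)$ and factor $s^{\mu}(1-\omega s^{-\rho})^{\gamma}$ out of the denominator, so that
\begin{equation*}
\tilde{y}(s)=\bigl[s^{-\nu(1-\mu)}(1-\omega s^{-\rho})^{\gamma\nu}\kappa+\tilde{f}(s)\bigr]\,\frac{s^{-\mu}(1-\omega s^{-\rho})^{-\gamma}}{1-\lambda s^{-2\mu}(1-\omega s^{-\rho})^{-(\gamma+\varpi)}}.
\end{equation*}
Assuming $|\lambda s^{-2\mu}(1-\omega s^{-\rho})^{-(\gamma+\varpi)}|<1$ for $s$ on a suitable Bromwich contour, I would expand the last fraction as a geometric series in $\lambda$ and distribute, producing two sums: one carrying the factor $\kappa$ with exponent combinations $s^{-\mu-\nu(1-\mu)-2\mu k}(1-\omega s^{-\rho})^{-\gamma(1-\nu)-k(\gamma+\varpi)}$, and one carrying $\tilde{f}(s)$ with exponents $s^{-\mu(2k+1)}(1-\omega s^{-\rho})^{-\gamma-k(\gamma+\varpi)}$.

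Inverting termwise is then a direct application of \eqref{lap}: each pure power term in the first sum inverts to $x^{\nu(1-\mu)+\mu+2\mu k-1}E^{\gamma(1-\nu)+k(\varpi+\gamma)}_{\rho,\nu(1-\mu)+\mu+2\mu k}(\omega x^{\rho})$, matching the claimed expression after noting $\gamma(1-\nu)+k(\varpi+\gamma)=\gamma+k(\varpi+\gamma)-\gamma\nu$. For the second sum, the product with $\tilde{f}(s)$ inverts by the convolution theorem precisely to the Prabhakar integral $\mathbf{E}^{\gamma+k(\varpi+\gamma)}_{\rho,\mu(2k+1),\omega,0^+}f(x)$, using $\mu+2\mu k=\mu(2k+1)$.

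The main obstacle will be justifying the termwise inversion of the Laplace transform and the absolute convergence of the resulting double series. I would handle this exactly as in the proof of Theorem \ref{prim}: invoke Doetsch's Theorem 30.1 \cite{doetsch} to legitimize termwise inversion on a sufficiently large abscissa, then expand each $E^{\alpha}_{\rho,\beta}$ as its defining power series and use the ratio test together with the asymptotic $\Gamma(z+a)/\Gamma(z+b)\sim z^{a-b}$ to bound the tails. The decay supplied by the factor $\lambda^k x^{2\mu k}/\Gamma(\mu(2k+1))$ will dominate the polynomial growth in the Mittag--Leffler parameters, yielding absolute convergence; for the $f$-dependent sum one additionally needs $f\in L^1[0,\infty)$ so that each convolution term is well defined. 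Finally, substituting back into \eqref{he4} and using the identities of Section \ref{barabasi} verifies that the constructed $y(x)$ indeed satisfies both the equation and the initial condition, completing the proof.
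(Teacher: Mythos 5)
Your proposal is correct and follows essentially the same route as the paper's proof: Laplace transform via Lemma \ref{l1} and formula \eqref{lap}, solving the algebraic equation, factoring $s^{\mu}(1-\omega s^{-\rho})^{\gamma}$ out of the denominator, geometric-series expansion, and termwise inversion with the convolution theorem, with convergence handled as in Theorem \ref{prim}. The resulting expression for $\mathcal{L}[y](s)$ and the exponent bookkeeping ($\gamma(1-\nu)+k(\varpi+\gamma)=\gamma+k(\varpi+\gamma)-\gamma\nu$) match the paper exactly.
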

				\begin{proof}
					By taking the Laplace transform of \eqref{he4} (see \eqref{ltra}) we get
					\begin{equation}
						s^{\mu }(1-\omega s^{-\rho })^{\gamma } \mathcal{L}[y](s) - \kappa s^{-\nu(1-\mu)}(1-\omega s^{-\rho})^{\gamma \nu}
						=\lambda \mathcal{L}[x^{\mu -1}E_{\rho ,\mu }^{\varpi }(\omega
						x^{\rho })] (s)\cdot \mathcal{L}[y] (s)+ \mathcal{%
						L}[f] (s),
					\end{equation}%
					so that
					\begin{align}
						\mathcal{L}[y] (s) = {} & \frac{\kappa s^{-\nu(1-\mu)-\mu}(1-\omega s^{-\rho})^{\gamma \nu -\gamma}}{
						1-\lambda s^{-2\mu}(1-\omega s^{-\rho})^{-\varpi-\gamma}}
						+ \frac{s^{-\mu}(1-\omega s^{-\rho})^{-\gamma} \mathcal{L}[f](s)}{1-\lambda s^{-2\mu}(1-\omega s^{-\rho})^{-\varpi-\gamma}} \\
						= {} & \kappa \sum_{k=0}^\infty \lambda^k s^{-\nu(1-\mu)-\mu-2\mu k} (1-\omega s^{-\rho})^{\gamma \nu-\gamma-k(\varpi+\gamma)}
						\notag \\
						& + \sum_{k=0}^\infty \lambda^k s^{-\mu(2k +1)}(1-\omega s^{-\rho})^{-\gamma-k(\varpi+\gamma)} \mathcal{L}[f](s). \notag	
					\end{align}%
					Last step is valid for $|\lambda s^{-2\mu} (1-\omega s^{-\rho})^{-\varpi -\gamma}|$.
					Inverting the Laplace transform (see the proof of Theorem 5.1 for more information) and using the convolution theorem, we obtain
					the claimed result.
				\end{proof}

				\begin{ex}
					Let us consider the Cauchy problem \eqref{he4} with $\kappa=0$, $f(x)=x^{\mathfrak{m} -1}$. By
					direct calculation we have that
					\begin{equation}
						\mathbf{E}_{\rho ,\mu (2k+1),\omega ,0^{+}}^{\gamma +k(\varpi+\gamma)}x^{\mathfrak{m}
						-1}=\Gamma (\mathfrak{m} )x^{\mu (2k+1)+\mathfrak{m} -1}E_{\rho ,\mu (2k+1)+\mathfrak{m} }^{\gamma +k(\varpi+\gamma)}
						(\omega x^{\rho }),
					\end{equation}%
					and the explicit solution of the Cauchy problem is given by
					\begin{equation}
						y(x)=\Gamma (\mathfrak{m} )x^{\mu +\mathfrak{m} -1}\sum_{k=0}^{\infty }(\lambda x^{2\mu
						})^{k}E_{\rho ,\mu (2k+1)+\mathfrak{m} }^{\gamma +k(\varpi+\gamma)}
						(\omega x^{\rho }).
					\end{equation}
				\end{ex}

				\begin{ex}
					Let us consider the Cauchy problem \eqref{he4} with $\kappa=0$, $f(x)=x^{\mathfrak{m} -1}E_{\rho
					,\mathfrak{m} }^{\sigma }(\omega x^{\rho })$. Recalling that
					\begin{equation}
						\mathbf{E}_{\rho ,\mu (2k+1),\omega ,0^{+}}^{\gamma +k(\varpi+\gamma)}x^{\mathfrak{m}
						-1}E_{\rho ,\mathfrak{m} }^{\sigma }(\omega x^{\rho })=x^{\mu (2k+1)+\mathfrak{m} -1}E_{\rho
						,\mu (2k+1)+\mathfrak{m} }^{\gamma +k(\varpi+\gamma) +\sigma }(\omega x^{\rho }),
					\end{equation}%
					we obtain
					\begin{equation}
						y(x)=x^{\mu +\mathfrak{m} -1}\sum_{k=0}^{\infty }(\lambda x^{2\mu })^{k}E_{\rho ,\mu
						(2k+1)+\mathfrak{m} }^{\gamma +k(\varpi+\gamma) +\sigma }(\omega x^{\rho }).
					\end{equation}
				\end{ex}

			\subsection{Fractional Poisson processes involving Hilfer--Prabhakar derivatives}
			
				\label{poipoi}
				In this section we present a generalization of the homogeneous
				Poisson process for which the governing difference-differential equations
				contain the regularized Hilfer--Prabhakar differential operator acting in
				time. The considered framework generalizes also the time-fractional
				Poisson process which in the recent years has become subject of intense research.
				It is well known that the state probabilities of the classical Poisson
				process and its time-fractional generalization can be found by solving an
				infinite system of difference-differential equations.
				We solve an analogous infinite system and find the corresponding  state
				probabilities that we give in form of an infinite series and in integral
				form. As the zero state probability  of a renewal process coincides with
				the residual time probability, we can characterize our process  also by its
				waiting distribution (the common way of characterizing a renewal process).
				We will see in the following that the state probabilities of the generalized
				Poisson process are expressed by functions which in fact generalize
				the classical Mittag--Leffler function. 
				The Mittag--Leffler function appeared as residual waiting time between events in
				renewal processes already in the Sixties of the past century, namely
				processes with properly scaled thinning out the sequence of events
				in a power law renewal process (see \cite{gne} and \cite{g3}). Such
				a process in essence is a fractional Poisson process. It must
				however be said that Gnedenko and Kovalenko did their analysis only
				in the Laplace domain, not recognizing their result as the Laplace
				transform of a Mittag--Leffler type function. Balakrishnan in 1985
				\cite{bal} also found this Laplace transform as highly relevant for
				analysis of time-fractional diffusion processes, but did not
				identify it as arising from a Mittag--Leffler type function.  In the
				Nineties of the past century the Mittag--Leffler function arrived at
				its deserved honour, more and more researchers became aware of it
				and used it. Let us only sketch a few highlights. Hilfer and Anton
				\cite{hill2} were the first authors who explicitly introduced
				the Mittag--Leffler waiting-time density
				\begin{align}
					f_{\mu}(t)=-\frac{d}{dt}E_{\mu}(-t^{\mu}) = t^{\mu-1} E_{\mu,\mu} (-t^\mu)
				\end{align}
				(writing it in form of a
				Mittag--Leffler function with two indices) into the theory of
				continuous time random walk. They showed that it is needed if one
				wants to get as evolution equation for the sojourn density the
				fractional variant of the Kolmogorov--Feller equation. In modern
				terminology they subordinated a random walk to the fractional
				Poisson process. By completely different argumentation the authors
				of \cite{sca} also discussed the relevance of $f_{\mu}(t)$ in theory
				of continuous time random walk. However, all these authors did not
				treat the fractional Poisson process as
				a subject of study in its  own right but simply as useful for
				general analysis of certain stochastic processes. The detailed
				analytic and probabilistic investigation was started (as far as we
				know) in 2000  by Repin and Saichev \cite{Repin}. More and more
				researchers then, often independently of each other, investigated
				this renewal process. Let us here only recall the few relevant papers
				\cite{g1,g2, seb, Beghin,Macci, cah, Laskin1, fed, scalas, meer} and see also the
				references cited therein.

				Let us thus start with the governing equations for the state probabilities.
				In view of Section \ref{barabasi} we define the following
				Cauchy problem involving  the regularized operator ${}^C\mathcal{D}^{\gamma,
				\mu}_{\rho, \omega, 0^+}$.
				
				\begin{definition}[Cauchy problem for the generalized fractional Poisson process]
					\begin{align}
						\label{aa1}
						\begin{cases}
							{}^C\mathcal{D}^{\gamma, \mu}_{\rho, -\phi, 0^+} p_k(t) = -\lambda
							p_k(t) +\lambda p_{k-1}(t), & k \ge 0, \: t > 0,
							\: \lambda > 0, \\
							p_k(0) =
							\begin{cases}
								1, \quad k=0, \\
								0, \quad k \geq 1,%
							\end{cases}
							&
						\end{cases}%
					\end{align}
					where $\phi > 0$, $\gamma \ge 0$, $0 < \rho \le 1$, $0<\mu \le 1$.
					We also have $0 < \mu \lceil \gamma \rceil/\gamma - r\rho < 1$, $\forall \: r=0,\dots,\lceil \gamma \rceil$, if $\gamma \ne 0$.
				\end{definition}
				These ranges for the parameters are needed to ensure non-negativity of the solution (see Section \ref{ren}
				for more details).
				Multiplying both the terms of \eqref{aa1} by $v^k$ and adding over
				all $k$, we obtain the fractional Cauchy problem for the probability
				generating function $G(v,t) = \sum_{k=0}^\infty v^k p_k(t)$ of the counting number
				$N(t)$, $t \ge 0$,
				\begin{equation}
					\label{gen}
					\begin{cases}
						{}^C\mathcal{D}^{\gamma, \mu}_{\rho, -\phi, 0^+}G(v,t)= -\lambda (1-v)G(v,t), &
						|v| \le 1, \\
						G(v,0)=1. &
					\end{cases}%
				\end{equation}

				\begin{te}
					\label{gianni}
					The solution to \eqref{gen} reads
					\begin{equation}
						\label{G}
						G(v,t)=\sum_{k=0}^{\infty}(-\lambda t^{\mu})^k(1-v)^k E_{\rho, \mu
						k+1}^{\gamma k}(-\phi t^{\rho}), \qquad |v| \le 1.
					\end{equation}
				\end{te}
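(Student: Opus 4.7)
The plan is to solve \eqref{gen} by the Laplace transform method, in close analogy with the proofs of the heat equation theorems in the previous subsection. First I would apply the Laplace transform in the time variable $t$ to both sides of \eqref{gen}, using Lemma \ref{l2} with $\omega = -\phi$ and the initial condition $G(v,0)=1$. This turns the fractional Cauchy problem into an algebraic equation
\begin{equation*}
s^{\mu}(1+\phi s^{-\rho})^{\gamma}\,\tilde G(v,s) - s^{\mu-1}(1+\phi s^{-\rho})^{\gamma} = -\lambda(1-v)\,\tilde G(v,s),
\end{equation*}
which immediately solves as
\begin{equation*}
\tilde G(v,s) = \frac{s^{\mu-1}(1+\phi s^{-\rho})^{\gamma}}{s^{\mu}(1+\phi s^{-\rho})^{\gamma} + \lambda(1-v)} = s^{-1}\Bigl(1 + \lambda(1-v)\,s^{-\mu}(1+\phi s^{-\rho})^{-\gamma}\Bigr)^{-1}.
\end{equation*}

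Next, for $s$ with sufficiently large real part so that $|\lambda(1-v)\,s^{-\mu}(1+\phi s^{-\rho})^{-\gamma}|<1$ (recall $|v|\le 1$), I would expand the last factor as a geometric series, obtaining
\begin{equation*}
\tilde G(v,s) = \sum_{k=0}^{\infty} (-\lambda)^{k}(1-v)^{k}\, s^{-\mu k-1}(1+\phi s^{-\rho})^{-\gamma k}.
\end{equation*}
Then I would invert term by term using formula \eqref{lap}, applied with parameters $\mu \leftrightarrow \mu k+1$, $\omega \leftrightarrow -\phi$, and exponent $-\gamma k$ in place of $-\gamma$. This gives
\begin{equation*}
\mathcal{L}^{-1}\!\left[s^{-\mu k-1}(1+\phi s^{-\rho})^{-\gamma k}\right](t) = t^{\mu k} E^{\gamma k}_{\rho,\mu k+1}(-\phi t^{\rho}),
\end{equation*}
and assembling the series yields exactly \eqref{G}.

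The main obstacle is the justification of the term-by-term Laplace inversion, which requires the absolute convergence of the resulting double series. I would handle this by the same strategy used in the proof of Theorem \ref{prim}: view \eqref{G} as a repeated series in $k$ (outer) and in the series defining $E^{\gamma k}_{\rho,\mu k+1}$ (inner), then apply the ratio test to the outer series using the Gamma asymptotics $\Gamma(z+a)/\Gamma(z+b)\sim z^{a-b}$. The dominant factor behaves like $[\mu k]^{-\mu}$ times $|\lambda(1-v) t^{\mu}|$, so the ratio tends to zero and absolute convergence holds for all $t>0$ and $|v|\le 1$, ensuring that Theorem 30.1 of Doetsch applies and validates the term-wise inversion. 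The remaining verification — substituting the series back into \eqref{gen} to confirm it satisfies both the equation and $G(v,0)=1$ — is then routine via the Laplace correspondence and $E^{0}_{\rho,1}(-\phi t^\rho)=1$.
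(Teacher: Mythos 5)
Your proposal is correct and follows essentially the same route as the paper's proof: Laplace transform via Lemma \ref{l2}, geometric series expansion of the resolvent under the condition $|\lambda(1-v)/(s^{\mu}(1+\phi s^{-\rho})^{\gamma})|<1$, and term-by-term inversion using formula \eqref{lap}, with the convergence justification deferred to the argument in the proof of Theorem \ref{prim}. No discrepancies to report.
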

				\begin{proof}
					In view of Lemma \ref{l2},  we have
					\begin{align}
						s^{\mu}[1+\phi s^{-\rho}]^{\gamma}\mathcal{L}[G](v,s)-s^{\mu-1} [1+\phi
						s^{-\rho}]^{\gamma}=-\lambda(1-v)\mathcal{L}[G](v,s),
					\end{align}
					so that
					\begin{align}
						\label{g1}
						\mathcal{L}[G](v,s)&=\frac{s^{\mu-1}[1+\phi s^{-\rho}]^{\gamma}}{%
						s^{\mu}[1+\phi s^{-\rho}]^{\gamma}+\lambda(1-v)}
						=\frac{1}{s} \left( 1+\frac{\lambda(1-v)}{s^{\mu}[1+\phi s^{-\rho}]^{\gamma}} \right)^{-1} \\
						&= \frac{1}{s}\sum_{k=0}^{\infty}\left[-\frac{\lambda(1-v)}{s^{\mu}
						[1+\phi s^{-\rho}]^{\gamma}}\right]^k =
						\sum_{k=0}^{\infty}(-\lambda(1-v))^k s^{-\mu k-1}[1+\phi
						s^{-\rho}]^{-k\gamma},  \notag
					\end{align}
					where $|\lambda(1-v)/[s^\mu(1+\phi s^{-\rho})^{\gamma}]|<1$.  By using %
					\eqref{lap} we can invert the Laplace transform \eqref{g1} obtaining the
					claimed result (for more details on the inversion and on the convergence of series of similar form of \eqref{G} 
					the reader can consult the proof of Theorem \ref{prim}).
				\end{proof}

				\begin{os}
					Observe that for $\gamma=0$, we retrieve the classical result obtained for
					example in \cite{Laskin}, formula (23).  Indeed, from the fact that
					\begin{equation}
						\label{tomto}
						E_{\rho, \mu k+1}^{0}(-\phi t^{\rho})
						= \sum_{r=0}^\infty \frac{(-\phi t^\rho)^r \Gamma(r)}{r!\Gamma(\rho r + \mu k +1)\Gamma(0)}=\frac{1}{\Gamma(\mu k +1)},
					\end{equation}
					(note that in the series representation of the three-parameter Mittag--Leffler function \eqref{tomto}
					each term is zero except that for $r=0$) equation \eqref{G} becomes
					\begin{equation}
						G(v,t)=\sum_{k=0}^{\infty}\frac{(-\lambda t^{\mu})^k(1-v)^k}{\Gamma(\mu k+1)}
						= E^1_{\mu,1}(-\lambda(1-v)t^{\mu}),
					\end{equation}
					that coincides with equation (23) in \cite{Laskin}.
				\end{os}

				From the probability generating function \eqref{G}, we are now able to find
				the probability distribution at fixed time $t$  of $N(t)$, $t \ge 0$,
				governed by  \eqref{aa1}. Indeed, a simple binomial expansion leads to
				\begin{align}
					G(v,t) = \sum_{k =0}^{\infty}v^k \sum_{r=k}^{\infty}(-1)^{r-k}\binom{r}{k}%
					(\lambda t^{\mu})^r E_{\rho, \mu r+1}^{\gamma r}(-\phi t^{\rho}).
				\end{align}
				Therefore,
				\begin{equation}
					\label{distro}
					p_k(t)=\sum_{r=k}^{\infty}(-1)^{r-k}\binom{r}{k}(\lambda t^{\mu})^r E_{\rho,
					\mu r+1}^{\gamma r}(-\phi t^{\rho}), \qquad k \ge 0, \: t \ge 0.
				\end{equation}
				We observe that, for $\gamma =0$,
				\begin{align}
					\label{indent}
					p_k(t) & =\sum_{r=k}^{\infty}(-1)^{r-k}\binom{r}{k}\frac{(\lambda t^{\mu})^r}{
					\Gamma(\mu r+1)}
					= (\lambda t^\mu)^k E_{\mu, \mu k+1}^{k+1}(-\lambda t^\mu) \\
					& = \frac{(\lambda t^\mu)^k}{k!} E^{(k)}_{\mu,1}(-\lambda t^\mu), \qquad k\geq 0, \: t \ge 0, \notag
				\end{align}
				The first expression of \eqref{indent} coincides with equation (1.4)
				in \cite{Beghin}. The third one is a convenient representation
				involving the $k$th derivative of the two-parameter Mittag--Leffler
				function evaluated at $-\lambda t^\mu$.
				It is
				immediate to note, from \eqref{G}, by inserting $v=1$, that $\sum_{k=0}^\infty p_k(t) = 1$.
				From the generating function \eqref{G} much information on the behavior of the process can be extracted.
				From \eqref{aa1}, with standard methods
				we can evaluate
				the mean value of $N(t)$.  In order to do so, it suffices to differentiate equation \eqref{gen}
				with respect to $v$ and to take $v=1$. We obtain
				\begin{equation}
					\begin{cases}
						{}^C\mathcal{D}^{\gamma, \mu}_{\rho, -\phi, 0^+}\mathbb{E}N(t)=\lambda, & t > 0,
						\\
						\mathbb{E}N(t)\big|_{t=0}=0, &
					\end{cases}%
				\end{equation}
				whose solution is simply given by
				\begin{equation}
					\label{mean}
					\mathbb{E}N(t)=\lambda t^{\mu}E^{\gamma}_{\rho, 1+\mu}(-\phi t^{\rho}),
					\qquad t \ge 0.
				\end{equation}

				\subsubsection{Subordination representation}

					\label{sr}
					In order to derive an alternative representation for the fractional Poisson
					process $N(t)$, $t \ge 0$, we present first some  preliminaries.  Consider
					the Cauchy problem
					\begin{align}
						\label{lanwan}
						\begin{cases}
							{}^C\mathcal{D}^{\gamma, \mu}_{\rho, -\phi, 0^+} h(x,t) = - \frac{\partial}{%
							\partial x} h(x,t), & t > 0, \: x \ge 0, \\
							h(x,0^+) = \delta(x). &
						\end{cases}%
					\end{align}
					Representation \eqref{capl} and the results obtained in \cite{Pol} simply
					imply that the Laplace--Laplace transform of $h(x,t)$  can be written as
					\begin{align}
						\label{ll}
						\tilde{\tilde{h}} (z,s) = \frac{s^{\mu-1}(1+\phi s^{-\rho})^{\gamma}}{%
						s^\mu (1+\phi s^{-\rho})^{\gamma}+z}, \qquad s > 0, \: z > 0.
					\end{align}
					This can be easily seen by taking the Laplace transform of \eqref{lanwan} with respect to both variables
					$x$ and $t$ and by using Lemma \ref{l2}. We have
					\begin{align}
						s^\mu (1+\phi s^{-\rho})^\gamma \tilde{\tilde{h}}(z,s) -s^{\mu-1}(1+\phi s^{-\rho})^\gamma =
						- z \, \tilde{\tilde{h}}(z,s),
					\end{align}
					which immediately leads to \eqref{ll}.
					Consider now the stochastic process, given as a finite sum of subordinated
					independent subordinators
					\begin{align}
						\mathfrak{V}_t = \sum_{r=0}^{\lceil \gamma \rceil} {}_r V_{\Phi(t)}^{\mu
						\frac{\lceil \gamma \rceil}{\gamma}-r\rho}, \qquad t \ge 0.
					\end{align}
					In the above definition $\lceil \gamma \rceil$ represents the ceiling of $%
					\gamma$. Furthermore  we considered a sum of $\lceil \gamma \rceil$
					independent stable subordinators of different indices and  the random time
					change here is defined by
					\begin{align}
						\Phi(t) = \binom{\lceil \gamma \rceil}{r} V_t^{\frac{\gamma}{\lceil
						\gamma \rceil}}, \qquad t \ge 0,
					\end{align}
					where $V_t^{\frac{\gamma}{\lceil \gamma \rceil}}$ is a further stable
					subordinator, independent of the others. Note that  in order the above
					process $\mathfrak{V}_t$, $t \ge 0$, to be well-defined, the constraint  $0<
					\mu \lceil \gamma \rceil/\gamma-r\rho <1$ holds for each $%
					r=0,1,\dots,\lceil \gamma \rceil$.  The next step is to define its hitting
					time. This can be done as
					\begin{align}
						\label{inverse}
						\mathfrak{E}_t = \inf \{ s \ge 0 \colon \mathfrak{V}_s > t \}, \qquad t \ge	0.
					\end{align}
					Theorem 2.2 of \cite{Pol} ensures us that the law $\Pr \{ \mathfrak{E}_t
					\in dx \}/dx$ is the solution to the Cauchy problem  \eqref{lanwan} and
					therefore that its Laplace--Laplace transform is exactly that in \eqref{ll}.
					
					We are now ready to state the following theorem.

					\begin{te}
						Let $\mathfrak{E}_t$, $t \ge 0$, be the hitting-time process presented in
						formula \eqref{inverse}.  Furthermore let $\mathcal{N}(t)$, $t \ge 0$, be a
						homogeneous Poisson process of parameter $\lambda>0$, independent of  $%
						\mathfrak{E}_t$.  The equality
						\begin{align}
							N(t) = \mathcal{N}(\mathfrak{E}_t), \qquad t \ge 0,
						\end{align}
						holds in distribution.
					\end{te}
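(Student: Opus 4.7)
The plan is to prove the distributional equality by showing that the two processes share a probability generating function, and to do this via a Laplace-transform argument that reduces everything to the computation already carried out in the proof of Theorem \ref{gianni}.

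First I would condition on the hitting-time process. Since $\mathcal{N}$ is independent of $\mathfrak{E}_t$ and is a homogeneous Poisson process of rate $\lambda$, we have for $|v|\le 1$
\begin{align}
\mathbb{E}\bigl[v^{\mathcal{N}(\mathfrak{E}_t)}\bigr]
= \mathbb{E}\bigl[e^{-\lambda(1-v)\mathfrak{E}_t}\bigr]
= \int_0^\infty e^{-\lambda(1-v)x}\, h(x,t)\, dx
= \tilde h\bigl(\lambda(1-v),t\bigr),
\end{align}
using that $\Pr\{\mathfrak{E}_t \in dx\}/dx = h(x,t)$ by the cited Theorem 2.2 of \cite{Pol}, and writing $\tilde h(z,t)$ for the Laplace transform of $h$ in the space variable $x$. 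This identifies the probability generating function of $\mathcal{N}(\mathfrak{E}_t)$ with a particular evaluation of $\tilde h$.

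Next I would take Laplace transform in $t$ and invoke the explicit expression \eqref{ll}. Setting $z=\lambda(1-v)$, this gives
\begin{align}
\mathcal{L}_t\bigl[\tilde h(\lambda(1-v),t)\bigr](s)
= \frac{s^{\mu-1}(1+\phi s^{-\rho})^{\gamma}}{s^\mu(1+\phi s^{-\rho})^{\gamma}+\lambda(1-v)}.
\end{align}
But the right-hand side is precisely the expression for $\mathcal{L}[G](v,s)$ computed in the first line of \eqref{g1} in the proof of Theorem \ref{gianni}. Hence the Laplace transforms in $t$ of $\mathbb{E}[v^{\mathcal{N}(\mathfrak{E}_t)}]$ and of $G(v,t)$ coincide for each admissible $v$.

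By uniqueness of the Laplace transform (valid in a common strip of convergence, which is guaranteed because both functions are uniformly bounded by $1$ in $t$ for $|v|\le 1$), we conclude $\mathbb{E}[v^{\mathcal{N}(\mathfrak{E}_t)}] = G(v,t)$ for all $t\ge 0$ and $|v|\le 1$. Since a probability generating function determines the law of an $\mathbb{N}$-valued random variable, this yields the asserted equality in distribution. The only potentially delicate point is the interchange of the expectation with the Laplace integral and the legitimacy of appealing to \eqref{ll} at the complex argument $z=\lambda(1-v)$; for real $v\in[-1,1]$ we have $\lambda(1-v)\ge 0$, so \eqref{ll} applies directly, and the general case follows by analytic continuation in $v$ inside the unit disk since $G(v,t)$ is a power series in $v$ with absolutely summable coefficients by the bound $\sum_k p_k(t)=1$.
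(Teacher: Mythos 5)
Your proof is correct and follows essentially the same route as the paper: condition on $\mathfrak{E}_t$, use independence to write the generating function of $\mathcal{N}(\mathfrak{E}_t)$ as $\int_0^\infty e^{-\lambda(1-v)y}\Pr(\mathfrak{E}_t\in dy)$, and pass to the Laplace transform in $t$ via \eqref{ll}. The only (harmless) difference is at the last step: the paper explicitly inverts the resulting transform using relation (2.20) of \cite{Pol} and checks the outcome against \eqref{G}, whereas you match the Laplace-domain expression with \eqref{g1} and invoke uniqueness of the Laplace transform.
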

					\begin{proof}
						The claimed relation can be proved simply writing the probability generating
						function related to the time-changed  process $\mathcal{N}(\mathfrak{E}_t)$
						as
						\begin{align}
							\sum_{k=0}^\infty v^k \Pr(\mathcal{N}(\mathfrak{E}_t)=k) = \int_0^\infty
							e^{-\lambda(1-v)y} \Pr (\mathfrak{E}_t \in dy).
						\end{align}
						Therefore, by taking the Laplace transform with respect to time we have
						\begin{align}
							\int_0^\infty \int_0^\infty e^{-\lambda(1-v)y -st} \Pr (\mathfrak{E}_t \in
							dy) dt = \frac{s^{\mu-1}(1+\phi s^{-\rho})^{\gamma}}{s^\mu (1+\phi
							s^{-\rho})^{\gamma}+\lambda (1-v)}.
						\end{align}
						Considering now the relation (2.20) of \cite{Pol}, the above Laplace
						transform can be inverted immediately, obtaining
						\begin{align}
							\sum_{k=0}^\infty v^k \Pr(\mathcal{N}(\mathfrak{E}_t)=k) = \sum_{k=0}^\infty
							(-\lambda(1-v))^k t^{\mu k} E_{\rho, k\mu+1}^{k \gamma} (-\phi t^\rho),
						\end{align}
						which coincides with \eqref{G}.
					\end{proof}

				\subsubsection{Renewal structure}

					\label{ren}
					The generalized fractional Poisson process $N(t)$, $t \ge 0$, can be
					constructed as a renewal process with  specific waiting times. Consider $k$
					i.i.d.\ random variables $T_j$, $j=1,\dots,k$, representing the  inter-event
					waiting times and having probability  density function
					\begin{align}
						\label{tempi}
						f_{T_j}(t_j) = \lambda t_j^{\mu-1} \sum_{r=0}^\infty (-\lambda
						t_j^\mu)^r E_{\rho,\mu r+\mu}^{\gamma r+\gamma} (-\phi t_j^\rho),
						\qquad t \ge 0, \: \mu \in (0,1),
					\end{align}
					and
					Laplace transform (recall formula (2.19) of \cite{Kil})
					\begin{align}
						\label{cornetto}
						\mathbb{E} \exp(-sT_j) & = \lambda \sum_{r=0}^\infty (-\lambda)^r s^{-\mu
						r-\mu} (1+\phi s^{-\rho})^{-\gamma r-\gamma} \\
						& = \frac{\lambda s^{-\mu}(1+\phi s^{-\rho})^{-\gamma}}{1+\lambda
						s^{-\mu}(1+\phi s^{-\rho})^{- \gamma}}, \qquad \left| -\lambda
						s^{-\mu}(1+\phi s^{-\rho})^{-\gamma} \right| < 1  \notag \\
						& = \frac{\lambda}{s^\mu (1+\phi s^{-\rho})^{\gamma}+\lambda}.  \notag
					\end{align}
					Denote $\mathcal{T}_m=T_1+T_2+\dots+T_m$ as the waiting time of the $m$th
					renewal event.  The probability distribution $\Pr(N(t)=k)$ can be written
					making the renewal structure explicit. Indeed,  by applying the Laplace
					transform to \eqref{distro} we have
					\begin{align}
						\label{bicic}
						\mathcal{L}[p_k](s) & = \sum_{r=k}^\infty (-1)^{r-k} \binom{r}{k} \lambda^r
						s^{-\mu r-1} (1+\phi s^{-\rho})^{- \gamma r} \\
						& = s^{-1} \sum_{r=0}^\infty (-1)^r \binom{r+k}{k} \left( \frac{\lambda}{%
						s^\mu (1+\phi s^{-\rho})^{\gamma}} \right)^{r+k}  \notag \\
						& = s^{-1} \lambda^k s^{-\mu k}(1+\phi s^{-\rho})^{-\gamma k}
						\sum_{r=0}^\infty \binom{-k-1}{r} \left( \frac{\lambda}{s^\mu (1+\phi
						s^{-\rho})^{\gamma}} \right)^r  \notag \\
						& = s^{-1} \lambda^k s^{-\mu k}(1+\phi s^{-\rho})^{-\gamma k} \left( 1+%
						\frac{\lambda}{s^\mu(1+\phi s^{-\rho})^{\gamma}} \right)^{-k-1}  \notag \\
						& = \frac{\lambda^k s^{\mu-1} (1+\phi s^{-\rho})^{\gamma}}{%
						[s^\mu(1+\phi s^{-\rho})^{\gamma}+\lambda]^{k+1}}.  \notag
					\end{align}
					On the other hand, by exploiting the renewal structure,
					\begin{align}
						\label{calculations}
						\mathcal{L}[p_k](s) & = \int_0^\infty e^{-st} \left( \Pr(\mathcal{T}_k<t) -
						\Pr(\mathcal{T}_{k+1}<t) \right) dt \\
						& = \int_0^\infty e^{-st} \left[ \int_0^t \Pr(\mathcal{T}_k \in dy) -
						\int_0^t \Pr(\mathcal{T}_{k+1}\in dy) \right] dt  \notag \\
						& = \int_0^\infty \Pr (\mathcal{T}_k \in dy) \int_y^\infty e^{-st} dt
						-\int_0^\infty \Pr(\mathcal{T}_{k+1} \in dy) \int_y^\infty e^{-st} dt  \notag
						\\
						& = s^{-1} \left[ \int_0^\infty e^{-sy} \Pr (\mathcal{T}_k \in dy) -
						\int_0^\infty e^{-sy} \Pr (\mathcal{T}_{k+1}\in dy)\right]  \notag \\
						& = s^{-1} \left[ \left( \frac{\lambda}{s^\mu (1+\phi
						s^{-\rho})^{\gamma}+\lambda} \right)^k - \left( \frac{\lambda}{s^\mu
						(1+\phi s^{-\rho})^{\gamma}+\lambda} \right)^{k+1} \right]  \notag \\
						& = s^{-1} \left[ \frac{\lambda^k[s^\mu(1+\phi
						s^{-\rho})^{\gamma}+\lambda]-\lambda^{k+1}}{ [s^\mu(1+\phi
						s^{-\rho})^{\gamma}+\lambda]^{k+1}} \right]  \notag \\
						& = \frac{\lambda^k s^{\mu-1} (1+\phi s^{-\rho})^{\gamma}}{%
						[s^\mu(1+\phi s^{-\rho})^{\gamma}+\lambda]^{k+1}}, \notag
					\end{align}
					which coincides with \eqref{bicic}.

					Clearly, considering the renewal structure of the process, we can write the probability
					of the residual waiting time as
					\begin{align}
					    \mathbb{P}(T_1>t) = p_0(t) = \sum_{r=0}^\infty (-\lambda t^\mu)^r E_{\rho,\mu r +1}^{\gamma r} (-\phi t^\rho).
					\end{align}

					In order to prove the non-negativity of \eqref{tempi} (and therefore of $p_k(t)$---see also the calculations in
					\eqref{calculations}) we can proceed as
					follows. We will consider only the case $\gamma \ne 0$ as the case $\gamma=0$ corresponds
					in fact to the case studied in \cite{Laskin,g4,Beghin} and others. From the
					Bernstein theorem (see e.g.\ \cite{schilling}, Theorem 1.4) it suffices to study the complete
					monotonicity of the Laplace transform \eqref{cornetto}. Recall that the function
					$z\to 1/(z+\lambda)$ is completely monotone for any positive $\lambda$ and that
					$1/(g(z)+\lambda)$ is completely monotone if $g(z)$ is a Bernstein function.
					Thus it is just a matter of proving that the function
					\begin{align}
						s^\mu(1+\phi s^{-\rho})^{\gamma} = \left( s^{\mu/\gamma} +\phi s^{\mu/\gamma - \rho} \right)^{\gamma}
					\end{align}
					is a Bernstein function.
					We have
					\begin{align}
						\label{bfunct}
						\left( s^{\mu/\gamma} +\phi s^{\mu/\gamma - \rho} \right)^{\gamma}
						&= \left[ \left( s^{\mu/\gamma}
						+\phi s^{\mu/\gamma-\rho} \right)^{\lceil \gamma \rceil} \right]^{\gamma/\lceil \gamma \rceil} \\
						& = \left( \sum_{r=0}^{\lceil \gamma \rceil} \binom{\lceil \gamma \rceil}{r}
						\phi^r s^{\mu\lceil \gamma \rceil/\gamma -\rho r} \right)^{\gamma/\lceil \gamma \rceil}. \notag
					\end{align}					
					From the fact that the space of Bernstein functions is closed under composition
					and linear combinations (see \cite{schilling} for details) we have that \eqref{bfunct}
					is a Bernstein function for $0 < \mu \lceil \gamma \rceil/\gamma - r\rho < 1$, $\forall \: r=0,\dots,\lceil \gamma \rceil$,
					which coincide with the constraints derived in Section \ref{sr}.
					
					Table \ref{ta} shows the relevant formulas for the generalized fractional Poisson process along with those
					of the classical time-fractional Poisson process.
					
					\begin{table}\centering
						\begin{tabular}{lll}\toprule
							& fPp & $E_\mu(-\lambda(1-v) t^\mu)$ \\
							$G(v,t)$ \phantom{abd} & & \\
							& gfPp \phantom{abd} & $\sum_{k=0}^{\infty}(-\lambda t^{\mu})^k(1-v)^k E_{\rho, \mu
							k+1}^{\gamma k}(-\phi t^{\rho})$ \\
							\\ \\
							& fPp & $(\lambda t^\mu)^k E_{\mu, \mu k+1}^{k+1}(-\lambda t^\mu)$ \\
							$p_k(t)$ & & \\
							& gfPp & $\sum_{r=k}^{\infty}(-1)^{r-k}\binom{r}{k}(\lambda t^{\mu})^r E_{\rho,
							\mu r+1}^{\gamma r}(-\phi t^{\rho})$ \\
							\\ \\
							& fPp & $\lambda t^{\mu-1} E_{\mu,\mu}(-\lambda t^\mu)$ \\
							$f_T(t)$ & & \\
							& gfPp & $\lambda t^{\mu-1} \sum_{r=0}^\infty (-\lambda t^\mu)^r
							E_{\rho,\mu r+\mu}^{\gamma r+\gamma} (-\phi t^\rho)$ \\
							\bottomrule
						\end{tabular}
						\caption{\label{ta}The probability generating function $G(v,t)$, the state probabilities $p_k(t)$ and
						the inter-arrival probability density function $f_T(t)$ for both the time-fractional Poisson process (fPp) and the
						generalized fractional Poisson process (gfPp).}
					\end{table}

				\subsubsection{Fractional integral of $N(t)$}

					In the recent paper \cite{fed1}, the authors considered the  Riemann--Liouville fractional
					integral
					\begin{equation}
						\mathcal{N}^{\alpha, \mu}(t)=\frac{1}{\Gamma(\alpha)}\int_0^t(t-s)^{%
						\alpha-1}\mathcal{N}^{\mu}(s)ds, \qquad \alpha>0,
					\end{equation}
					where $\mathcal{N}^{\mu}(t)$, $t\geq 0$, is the time-fractional Poisson process, whose
					state-probabilities are governed by
					difference-differential equations involving Caputo derivatives.
					They discussed some relevant characteristics of the obtained process $\mathcal{N}%
					^{\alpha, \mu}(t)$ such as its mean and variance.  Following this idea, we
					consider the fractional integral of $%
					N(t)$, $t \ge  0$. In particular,
					\begin{equation}
						N^{\alpha,\mu}(t)=\frac{1}{\Gamma(\alpha)}\int_0^t(t-s)^{\alpha-1}N(s)ds,
						\quad \mu\in(0,1), \: \alpha>0, \: t> 0.
					\end{equation}
					We can explicitly calculate the mean of the process  $N^{\alpha,\mu}(t)$ by using \eqref{mean}.
					\begin{align}
						\mathbb{E} N^{\alpha,\mu}(t)&=\frac{1}{\Gamma(\alpha)}\int_0^t(t-s)^{\alpha-1}
						\mathbb{E}N(s) ds \\
						&=\frac{\lambda}{\Gamma(\alpha)}\int_0^t(t-s)^{\alpha-1} s^{\mu}E^{\gamma}_{\rho,
						1+\mu}(-\phi s^{\rho})ds  \notag \\
						& = \lambda \int_0^t (t-s)^{\alpha-1} E_{\rho,\alpha}^0 (-\phi(t-s)^\rho) s^\mu E_{\rho,\mu+1}^{\gamma} (-\phi s^\rho)
						ds \notag \\
						&= \lambda t^{\alpha+\mu}E^{\gamma}_{\rho, \alpha+\mu+1}(-\phi
						t^{\rho}), \qquad t \ge 0.  \notag
					\end{align}

					Note also that, exploiting Theorem 2 of \cite{Kil}, a more general Prabhakar
					integral of the process $N(t)$, $t \ge 0$, can be studied.
					
			\appendix
				
			\section{Comments on the proofs of Theorems \ref{prim} and \ref{gianni}}
			
				For each $n=0,\dots,\infty$,
				the Laplace transforms in the last line of formula (33)
				can be inverted by choosing an appropriated
				Bromwich contour (so that all the singularities lie to the left of the path). In particular the abscissa $c$
				of the vertical line on which the integral is computed must be chosen consistently with the constraints
				$\Re(s) > 0$, $|\omega s^{-\rho}|<1$, and
				$|Kk^2/(s^\mu(1-\omega s^{-\rho})^\gamma)|<1$ (see the figure below for an example of the latter constraint).
				\begin{figure}[ht!]
					\centering
					\includegraphics[scale=0.7]{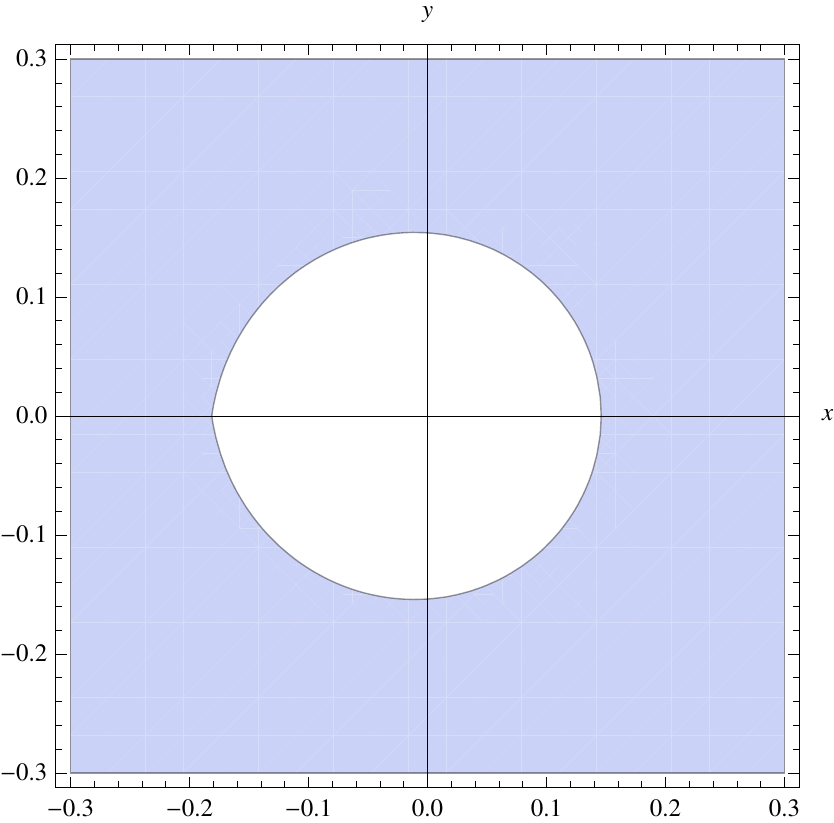}
					\caption{The constraint $|Kk^2/(s^\mu(1-\omega s^{-\rho})^\gamma)|<1$ (blue region,
						with $x=\Re(s)$, $y=\Im(s)$) for $(K,k,\mu,\omega,\rho,\gamma)
						=(1,1,0.5,-1,0.25,1)$.}
				\end{figure}
			The abscissa $c$ 
			clearly depends on $k$ which varies in $\mathbb{R}$ (notice however that $c$ does not depend on $n$). The
			inversion term by term of the Laplace transform is then permitted by Theorem 30.1 of \citet{doetsch} and
			the series of the inverse transforms converges absolutely in $t \ge 0$
			irrespective of the value of $c$ chosen (provided it is sufficently large).
		
			In Fig.~2 it is possible to actually see the region of validity of the constraint
			$|\lambda (1-v)/(s^\mu(1+\phi s^{-\rho})^\gamma)|<1$ for a specific choice of the parameters. As before,
			the application of the inverse Laplace transform term by term is ensured by Theorem 30.1
			of \citet{doetsch}. 
			\begin{figure}[ht!]
				\centering
				\includegraphics[scale=0.7]{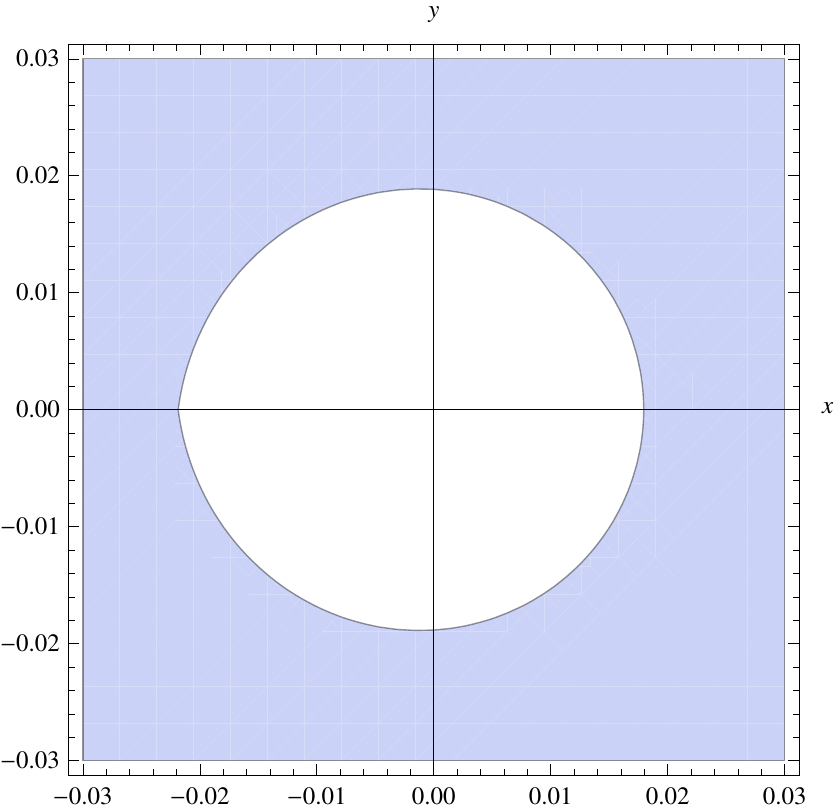}
				\caption{The constraint $|\lambda (1-v)/(s^\mu(1+\phi s^{-\rho})^\gamma)|<1$ (blue region,
					with $x=\Re(s)$, $y=\Im(s)$) for $(\lambda,v,\mu,\phi,\rho,\gamma)
					=(1,0.5,0.5,1,0.25,1)$.}
			\end{figure}

			\subsection*{Acknowledgements}
			
				\v{Z}ivorad Tomovski has been supported by the Berlin
				Einstein Foundation through a Research Fellowship during his visit to the
				Weierstrass Institute for Applied Analysis and Stochastics in Berlin for
				three months during 2013.

				Federico Polito has been supported by project AMALFI (Universit\`{a} di Torino/Compagnia di San Paolo).

	\end{document}